\documentclass[a4paper,11pt]{article}
\newcommand{\beq}{\begin{equation}}
\newcommand{\eeq}{\end{equation}}

\newcommand{\tab}{\hspace*{2em}}
\newcommand{\fNorm}[1]{\lVert#1\rVert}

\usepackage{amsthm,amssymb,amsmath}
\usepackage{graphicx}
\usepackage{epstopdf}
\usepackage{mathdots}
\usepackage[lmargin=.75in,rmargin=.75in,tmargin=.75in,bmargin=.75in]{geometry}
\usepackage{float}
\usepackage{hyperref}
\usepackage{subfigure}
\newtheorem{thm}{Theorem}[section]

\theoremstyle{remark}

\theoremstyle{definition}

\begin{document}

\title{The application of the exact operational matrices for solving\\
 the Emden-Fowler equations, arising in astrophysics}
\author {K. Parand $^{\mbox{\footnotesize }}$\thanks{E-mail
address: \texttt{k\_parand@sbu.ac.ir}, Corresponding author, (K. Parand)}\hspace{1mm}
, \normalsize{Sayyed A. Hossayni$^{\mbox{\footnotesize
}}$\thanks{E-mail address: \texttt{hossayni@iran.ir},
(Sayyed A. Hossayni)}\hspace{1mm}}
, \normalsize{J.A. Rad$^{\mbox{\footnotesize
}}$\thanks{E-mail address: \texttt{j.amanirad@gmail.com; j\_amanirad@sbu.ac.ir},
(J.A. Rad)}\hspace{1mm}}
\vspace{-2mm}\\\\
\footnotesize{\em $^{\mbox{\footnotesize a}}$Department$\!$ of$\!$
Computer Sciences, Faculty$\!$ of$\!$ Mathematical Sciences$\!$,}\vspace{-2mm}\\
\footnotesize{\em Shahid Beheshti University, Evin,Tehran 19839,Iran}\\
} \maketitle
\vspace{-.9cm}\noindent\linethickness{.5mm}{\line(1,0){475}}
\maketitle
\begin{abstract}
The objective of this paper is to apply the well-known exact operational matrices (EOMs) idea for solving the Emden-Fowler equations, illustrating the superiority of EOMs versus ordinary operational matrices (OOMs).
Up to now, a few studies have been conducted on EOMs and the differential equations solved by them do not have high-degree nonlinearity and the reported results are not regarded as appropriate criteria for the excellence of the new method.
So, we chose Emden-Fowler type differential equations and solved them by this method.
To confirm the accuracy of the new method and to show the preeminence of EOMs versus OOMs, the norm1 of the residual and error function of both methods are evaluated for multiple $m$ values, where $m$ is the degree of the Bernstein polynomials.
We reported the results in form of plots to illustrate the error convergence of both methods to zero and also to show the primacy of the new method versus OOMs.
The obtained results have demonstrated the increased accuracy of the new method.\\ \\
\noindent {\bf Keywords}: Exact operational matrices, Bernstein polynomials, Emden-Fowler equation, Lane-Emden equation.
      \\\\
\noindent {\bf AMS subject classification}: 65M70, 65N35.
\vspace{.3in}
\end{abstract}
\section{Introduction}
In 1915, Galerkin \cite{Galerkin} introduced a broad generalization of the Ritz \cite{Ritz} method to be used primarily for the approximate solution of variational and boundary value problems, including problems that cannot be reduced to variational problems \cite{soviet.enc}. His method was highly appreciated and thousands of problems have been solved, using the Galerkin method, since 1915.\\
The basic idea behind the Galerkin method is as follows. Suppose it is required to find a solution, defined in some domain $U$, of the following (nonlinear) differential equation (which does not have any exact analytical solution)
\begin{align}\label{supN0}
&\mathcal{N}\left[u(x)\right]= 0,&&x\in U\\\nonumber
&B\left(u(s)\right)= 0,&&s\in \partial U
\end{align}
where the solution satisfies at the boundary $B\left(u(s)\right)$ of $U$ the homogeneous boundary conditions.
Firstly, suppose arbitrary so-called "trial" functions $\beta_i(x)$
\begin{align}\label{takhMWR}
&u(x) \approx y_m(x)= \sum_{i=0}^\infty{c_i\beta_i(x)}&B\left(y_m(s)\right)= 0,&&s\in \partial U
\end{align}
As we mentioned, $\mathcal{N}\left[u(x)\right]$ does not have any exact analytical solution; so, we are sure that $y(x)$ cannot satisfy the equation (\ref{supN0}).
Therefore, we attempt to minimize the residual function
\begin{align}\label{jaygMWR}
R_m(x)= \mathcal{N}\left[y_m(x)\right] \neq 0
\end{align}
The main idea of the Galerkin method to minimize the residual function is to find the coefficients $c_i$ so that the residual function becomes zero in average mode
\begin{align}\label{MWREQ}
&(R_m, \beta_j(x))_\omega= \int_U{R_m\beta_j(x)\omega(x)dx}= 0,&&x\in U
\end{align}
in which, $\omega(x)$ is a weight function.\\
Meanwhile, an old and sufficient technique to simplify the computations in the residual function is using the operational matrices.
In 1975, Chen and Hsiao \cite{chen.hsiao.om} introduced an operational matrix to perform integration of Walsh functions.
Chen \cite{walsh.fom} continued his work to introduce some operational matrices to do fractional calculus operations. In 1977, Sannuti et al. introduced the Block-Pulse functions integration operational matrix.
As Mouroutsos \cite{mourout} writes, these studies continued at that time with the determination  of integration operational matrix for miscellaneous basis functions like the Laguerre, the Legendre, the Chebyshev and the Fourier trial functions.
In 1988-1989, Razzaghi et al. \cite{Razzaghi1,Razzaghi.2,Razzaghi.3} presented the integral and product operational matrix based on Fourier series, Taylor series and shifted-Jacobi series.
In 1993, Bolek \cite{bolek.93} has presented a direct method for deriving an operational matrix of differentiation for Legendre polynomials.
In 2000-2012, Yousefi et al. \cite{Razzaghi.Yousefi.1,Razzaghi.Yousefi.2,Yousefi.3,Yousefi.Behroozifar.Dehghan.APM,Yousefi.Behroozifar.Dehghan.CAM,S.A.Yousefi.Behroozifar.2010}, have presented Legendre wavelets and Bernstein operational matrix for solving the variational problems and differential equations.
In 2012, Kazem et al. \cite{Kazem.rad.zna},  has presented a general formulation for the d-dimensional orthogonal functions and their derivative and product matrices has been presented.
In 2013, The authors of \cite{Kazem.rad.MCM} presented a modified form of the homotopy analysis method based on Chebyshev operational matrices.
Kazem \cite{kazem.jacobi.2013} has derived the Jacobi integral operational matrix for the fractional calculus and lots of other new works.\\
Applying the Galerkin method to solve a differential equation, by low-cost computations, we aim to implement the (\ref{jaygMWR}) residual functions computations using the operational matrices.
But, the idea of the derivation of operational matrices does not guarantee the exactitude of the operations done by the matrices.
For a short description, suppose the base functions set and the known functions $f(x)$ and $g(x)$
\begin{eqnarray}\nonumber
\Theta_m= \{\beta_1(x), \beta_2(x), \cdots, \beta_m(x)\}\\\nonumber
f(x)= \sum_{i=1}^m{\kappa_i \beta_i(x)= k^Tb_m(x)},\\\nonumber
g(x)= \sum_{i=1}^m{\lambda_i\beta_i(x)}= l^Tb_m(x),
\end{eqnarray}
where
\begin{eqnarray}\nonumber
b_m(x)= \left[\begin{matrix}\beta_1(x)& \beta_2(x)& \cdots& \beta_m(x)\cr\end{matrix}\right]^T,\\\nonumber
k= \left[\begin{matrix}\kappa_1(x)& \kappa_2(x)& \cdots& \kappa_m(x)\cr\end{matrix}\right]^T,\\\nonumber
l= \left[\begin{matrix}\lambda_1(x)& \lambda_2(x)& \cdots& \lambda_m(x)\cr\end{matrix}\right]^T.
\end{eqnarray}
Ordinary operational matrices (OOMs) employ the differentiation, integration and product of the base vector(s) as
\begin{eqnarray}\nonumber
&&\int_0^x {f(x) dx}= k^T\int_0^x{b_m(x)} \overset{.}{=} k^T P_1 b_m(x)\\\nonumber
&&\frac{d}{dx}f(x)= k^T\frac{d}{dx}{b_m(x)} \overset{.}{=} k^T D_1 b_m(x)\\\nonumber
&&f(x) g(x)= (k^T b_m(x))(b_m(x)^T l) \overset{.}{=} k^T\widehat{L_1} b_m(x)
\end{eqnarray}
where $P_1$, $D_1$ and $\widehat{L_1}$ are integration, differentiation and product matrices related to the base $\Theta_m$ and the symbol ($\overset{.}{=}$) is employed for definition.
As it can be seen, both of the $f(x)$ and the $g(x)$ are in the $Span{(\Theta_m)}$;
but, it is quite probable that their integration or product do not remain in that space;
which means that operational matrices do not guarantee exactitude of the done operations.\\
Recently, Parand et al. \cite{hossayn.tjmcs}, have presented a solution for this problem.
They presented exact operational matrices (EOMs) $P_2$, $D_2$ and $\widehat{L_2}$ so that
\begin{eqnarray}\nonumber
&&\int_0^x {f(x) dx}= k^T\int_0^x{b_m(x)} = k^T P_2 b_{n_1}(x)\\\nonumber
&&\frac{d}{dx}f(x)= k^T\frac{d}{dx}{b_m(x)} = k^T D_2 b_{n_2}(x)\\\nonumber
&&f(x) g(x)= (k^T b_m(x))(b_m(x)^T l)= k^T\widehat{L_2} b_{n_3}(x).
\end{eqnarray}
As it can be seen, all of the approximations have been removed and also the base vector has been changed. $b_{n_i}(x)$ depends on the $b_m(x)$ and the respective operational matrix.\\
Parand et al. \cite{hossayn.tjmcs} have implemented their idea to introduce the Bernstein polynomials operational matrices and solved some simple differential equations by them. The solved differential equations do not show the power of the introduced method well.
The potency of the new method becomes clear, only if, it solves some nonlinear problems with high-degree nonlinearity.
Therefore, in this paper, we chose the Emden-Fowler type differential equations to be solved by the Bernstein polynomials EOMs; so, the EOMs performance becomes apparent.\\
In the section \ref{S_EFEQ}, we briefly introduce the Emden-Fowler equations.
The section \ref{S_B_polynomials} aim to familiarize the reader to the EOMs obtained by \cite{hossayn.tjmcs} like the differentiation matrix $D$, the integration matrix $P$, the product matrix $\widehat{C}$, and the Galerkin matrix $Q$.
Also, we introduce a new "series operational matrix" to achieve the best approximation of $g\left(y(x)\right)$ by the Bernstein polynomials, where $g(x)$ is a determinate function and $y(x)$ is the differential equation unknown function.
At the end of the section, a summary of the solution error analysis proposed in \cite{hossayn.tjmcs} is mentioned, where the problem solution is approximated by the Bernstein polynomials.
In the section \ref{S_application}, 7 Emden-Fowler type equations are solved by the EOM approach.
Also, the results are compared with OOM approach results to prove the validity and applicability of the method and to show the superiority of EOMs in comparison with OOMs.
Finally, in the section \ref{S_conclusion}, a conclusion is brought alongside some new suggestions for further studies.

\section{Emden-Fowler equations}\label{S_EFEQ}
\subsection{Introduction to the equations}
Many problems in the mathematical physics and astrophysics which occur on semi-infinite interval are related to the diffusion of heat perpendicular to the parallel planes and can be modeled by the heat equation:
\begin{eqnarray}\nonumber
x^{-k}\frac{d}{dx}\left(x^k\frac{d}{dx}\right)+ f(x)g\left(y(x)\right)= h(x)~,~~~~~~~x>0,~k>0~,
\end{eqnarray}
or equivalently
\begin{eqnarray}\nonumber
y''(x)+ \frac{k}{x}y'(x)+ f(x)g\left(y(x)\right)= h(x)~,~~~~~~~~~~x>0,~k>0~,
\end{eqnarray}
where $y(x)$ is the temperature. For the steady-state case and for $k= 2$ and $h(x)= 0$, this equation is the generalized Emden-Fowler equation \cite{S.Chandrasekhar-book,H.T.Davis-book,O.U.Richardson-book} given by
\begin{eqnarray}\label{EmFw}
y''(x)+ \frac{2}{x}y'(x)+ f(x)g\left(y(x)\right)= 0~,~~~~~~~x>0~,
\end{eqnarray}
subject to the conditions
\begin{eqnarray}\nonumber
y(0)= a~,~~~~~~~~~~y'(0)= b~,
\end{eqnarray}
where $f(x)$ and $g\left(y(x)\right)$ are two given functions.\\
When $f(x)= 1$, Eq. (\ref{EmFw}) reduces to the Lane-Emden equation which, with specified $g\left(y(x)\right)$, was used to model several phenomena in mathematical physics and astrophysics, such as the theory of stellar structure, the thermal behavior of a spherical cloud of gas, isothermal gas sphere and theory of thermionic currents
\begin{eqnarray}\label{LEm}
&&y''(x)+ \frac{2}{x}y'(x)+ g\left(y(x)\right)= 0~,~~~~~~~~x>0~,\\\nonumber
&&y(0)= a,\text{~~}y'(0)= b~.\nonumber
\end{eqnarray}
For $g\left(y(x)\right)= y^p(x)$, $a= 1$ and $b= 0$ equation (\ref{LEm}) yields the standard Lane-Emden equation that was used to model the thermal behavior of a spherical cloud of gas acting under the mutual attraction of its molecules and subject to the classical laws of thermodynamics \cite{S.Chandrasekhar-book,A.Aslanov,R.P.AgarwalD.ORegan}.
\begin{eqnarray}\label{sLEm}
&&y''(x)+ \frac{2}{x}y'(x)+ y^p(x)= 0~,~~~~~~~~~~~x>0~,\\\nonumber
&&y(0)= 1~,\text{~~}y'(0)= 0~,\nonumber
\end{eqnarray}
where $p>0$ is constant. Substituting $p= 0, 1$ and 5 into Eq. (\ref{sLEm}) leads to the following exact solutions respectively
\begin{eqnarray}\label{exactEmFw1}
y(x)= 1- \frac{1}{3!}x^2~,~~~~~~~y(x)= \frac{\sin{(x)}}{x}~,~~~~~~~y(x)= \left(1+ \frac{x^2}{3}\right)^{-\frac{1}{2}}~.
\end{eqnarray}
In other cases, there is not any analytical exact solution for the standard Lane-Emden equation. Also, for two Emden-Fowler equations (\ref{EteLE}) and (\ref{EeEmFw}), we have the below exact analytical solutions
\begin{align}\label{exactEmFw2}
&y(x)= -2\ln{(1+x^2)}\\
&y(x)= e^{x^2}
\end{align}
In the following subsection, we attempt to going to solve simultaneous Emden-Fowler equations whose $f(x)$ and $g\left(y(x)\right)$ functions are given as
\begin{eqnarray}
&&f(x)= 1~,~~~~~~~~g\left(y(x)\right)= y^p(x)~,~~~~~~p\in\mathbb{N}~,~~~~~~~~~~\left[\begin{matrix}a,~b\cr\end{matrix} \right]= \left[\begin{matrix} 1,~0\cr\end{matrix}\right]\label{EsLE}\\
&&f(x)=1~,~~~~~~~~g\left(y(x)\right)=y^p(x)~,~~~~~~p\notin\mathbb{N}~,~~~~~~~~~~\left[\begin{matrix}a,~b\cr\end{matrix} \right]= \left[\begin{matrix} 1,~0\cr\end{matrix}\right]\label{ErsLE}\\
&&f(x)= 1~,~~~~~~~~g\left(y(x)\right)= e^{y(x)}~,~~~~~~~~~~~~~~~~~~~~~~~~~~~ \left[\begin{matrix}a,~b\cr\end{matrix}\right]= \left[\begin{matrix}0,~0\cr\end{matrix}\right]\label{EseLE}\\
&&f(x)= 1~,~~~~~~~~g\left(y(x)\right)= \sinh{\left(y(x)\right)}~,~~~~~~~~~~~~~~~~~~~\left[\begin{matrix}a,~b\cr\end{matrix} \right]= \left[\begin{matrix}1,~0\cr\end{matrix}\right]\label{EshLE}\\
&&f(x)= 1~,~~~~~~~~g\left(y(x)\right)= \sin{\left(y(x)\right)}~,~~~~~~~~~~~~~~~~~~~~~\left[\begin{matrix}a,~b\cr\end{matrix} \right]= \left[\begin{matrix}1,~0\cr\end{matrix}\right]\label{EsinLE}\\
&&f(x)=1~,~~~~~~~~g\left(y(x)\right)=4\left(2e^{y(x)}+e^{\frac{y(x)}{2}}\right)~,~~~~~~~~~~\left[ \begin{matrix} a,~b\cr\end{matrix}\right]= \left[\begin{matrix}0,~0\cr\end{matrix}\right]\label{EteLE}\\
&&f(x)= -2\left(2x^2+ 3\right)~,~~~~~g\left(y(x)\right)= y(x)~,~~~~~~~~~~~~~~~\left[\begin{matrix} a,~b\cr\end{matrix}\right]= \left[\begin{matrix}1,~0\cr\end{matrix}\right]\label{EeEmFw}
\end{eqnarray}
Several authors have investigated this equation.
In 1989, Bender et al. \cite{C.M.Bender.Milton.Pinsky.Simmons} proposed a perturbative technique for solving nonlinear differential equations such as Lane�Emden.
It consists of replacing nonlinear terms in the Lagrangian such as $y^n$ by $y^{1+\delta}$ and then treating $\delta$ as a small parameter.
In 1993, Shawagfeh \cite{N.T.Shawagfeh} applied a nonperturbative approximate analytical solution for the Lane�Emden equation, using the Adomian decomposition method.
His solution was in the form of a power series. He used Pad\'{e} approximation method to accelerate the convergence of the power series.
In 2001, Mandelzweig and Tabakin \cite{V.B.Mandelzweig.Tabakin} used the quasilinearization approach to solve Lane-Emden equation.
That method approximates the solution of a nonlinear differential equation by treating the nonlinear terms as a perturbation about the linear ones; but, unlike the perturbation theories, it is not based on the existence of some kind of small parameters.
Wazwaz \cite{A.M.Wazwaz.AMC.310} employed the Adomian decomposition method with an alternate framework designed to overcome the difficulty of the singular point.
It was applied to the differential equations of Lane-Emden type.
Thereafter, he used the \cite{wazwaz.173.176} modified decomposition method for solving analytic treatment of nonlinear differential equations such as Lane-Emden equation.
The modified method accelerates the rapid convergence of the series solution, highly, reduces the size of the
work and provides the solution by using few iterations only without any need to the so-called Adomian polynomials.
In 2003, Liao \cite{S.Liao.142.16} provided an analytic algorithm for Lane-Emden type equations. This algorithm logically contains the well-known Adomian decomposition method. Different from all other analytical techniques, this algorithm, itself, provides the researcher with a convenient way to adjust convergence regions even without Pad\'{e} technique.
Applying the semi-inverse method, J. He \cite{J.H.He.143.541} obtained a variational principle for the Lane-Emden equation, which gives much numerical convenience when applying finite element methods or the Ritz method.
In 2004, Parand and Razzaghi \cite{K.Parand.Razzaghi.69.353} presented a numerical technique based on a rational Legendre Tau method to solve higher ordinary
differential equations such as Lane-Emden. In their work, the operational matrices of the derivative and product of rational Legendre functions together with the Tau method were utilized to reduce the solution of these physical problems to the solution of systems of algebraic equations.
Ramos \cite{J.I.Ramos.1,J.I.Ramos.2,J.I.Ramos.3} solved the Lane-Emden equation through different methods. He presented linearization methods for singular
initial value problems in second order ordinary differential equations such as Lane-Emden. These methods result in linear constant-coefficients ordinary differential equations which can be integrated analytically;
thus, they yield piecewise analytical solutions and globally smooth solutions \cite{J.I.Ramos.1}.
Thereafter, in \cite{J.I.Ramos.3}, he presented a series solutions of the Lane-Emden equation based on either a Volterra integral equation formulation or the expansion of the dependent variable in the original ordinary differential equation and compared them with series solutions obtained by means of integral or differential equations based on a transformation of the dependent variables.
Also, in \cite{J.I.Ramos.2}, he developed piecewise-adaptive decomposition methods for the solution of nonlinear ordinary differential equations.
Piecewise-decomposition methods provide series solutions in intervals, which are subject to continuity conditions at the end points of each interval and their adaption is based on the use of either a fixed number of approximants and a variable step size, a variable number of approximants and a fixed step size or a variable
number of approximants and a variable step size.
In 2006, Yousefi \cite{S.A.Yousefi.amc.1417} used integral operator and converted Lane-Emden equations to integral equations and then applied Legendre
wavelet approximations. In his work, properties of the Legendre wavelet together with the Gaussian integration method were utilized to reduce the integral equations to the solution of algebraic equations. By his method, the equation was formulated
on $[0, 1]$.
In 2008, Aslanov \cite{A.Aslanov.372.3555} constructed a recurrence relation for the components of the approximate solution and investigated the convergence
conditions for the Emden-Fowler type equations. He improved the previous results on the convergence radius of the series solution.
Dehghan and Shakeri \cite{Dehghan.Shakeri.13.53} investigated Lane-Emden equation, using the variational iteration method, and showed the efficiency
and the applicability of their procedure for solving this equation. Their technique does not require any discretization, linearization or small perturbations and therefore reduces the volume of computations.
In 2009, Chowdhury and Hashim \cite{M.S.H.ChowdhuryI.Hashim.nar} obtained the analytical solutions of the generalized Emden-Fowler type equations in the second
order ordinary differential equations by homotopy-perturbation method (HPM). This method is a coupling of the perturbation method and the homotopy method. The main feature of the HPM is that it deforms a difficult problem into a set of problems which are easier to solve. HPM yields solutions in convergent series forms with easily computable terms.
In 2009, Bataineh et al. \cite{A.S.Bataineh.S.M.Noorani.Hashim.13.53} obtained analytical solutions of singular initial value problems (IVPs) of the Emden-Fowler type by the homotopy analysis method (HAM). Their solutions contained an auxiliary parameter which provided a convenient
way of controlling the convergence region of the series solutions. It was shown that the solutions, obtained by the Adomian decomposition method (ADM) and the homotopy-perturbation method (HPM), are only special cases of the HAM solutions.
Parand et al. \cite{K.Parand.shahini.Dehghan.JCP} proposed a pseudospectral technique to solve the
Lane-Emden type equations on a semi-infinite domain. The method is based on the rational
Legendre functions and Gauss-Radau integration. Their method reduces the solution of the nonlinear ordinary differential equation to the solution of a nonlinear algebraic equations system.
Also, in recent years, lots of researches have been conducted on the Emden Fowler equations;
the interested reader is referred to the author's recent papers \cite{Parand.Dehghan.Rezaei.Ghaderi,Parand.Abbasbandy.Kazem.Rezaei,Parand-Taghavi,Parand-Pirkhedri,new.astron.Khalique.Ntsime,new.astron.VanGorder,new.astron.VanGorder.2,new.astron.PandeyNarayanKumar,new.astron.BoutanGorder}.
\section{Bernstein polynomials (B-polynomials)}\label{S_B_polynomials}
\subsection{Overview of B-polynomials}
The Bernstein polynomials (B-polynomials) \cite{S.A.Yousefi.Behroozifar.2010}, are some useful polynomials defined on $[0,1]$. The Bernstein polynomials of degree $m$ form a basis for the power polynomials of degree $m$ \cite{mathworld.wolfram}. We can mention lots of their properties. They are continuous over the domain. They satisfy the symmetry
\begin{eqnarray}\nonumber
B_{i,m}(x)= B_{m-i,m}(1-x)
\end{eqnarray}
positivity
\begin{eqnarray}\nonumber
\forall x\in [0,1]:&B_{i,m}(x)\geq 0
\end{eqnarray}
normalization or unity of partition \cite{mathworld.wolfram}
\begin{eqnarray}\nonumber
\sum_{i=0}^m{B_{i,m}(x)}= 1.
\end{eqnarray}
Also, $B_{i,m}(x)$ in which $i\notin \{0,m\}$ has a single unique local maximum of
\begin{eqnarray}\nonumber
i^im^{-m}(m-i)^{m-i}\binom{m}{i}
\end{eqnarray}
occurring at $t=\frac{i}{m}$. All of the B-polynomial bases take 0 value at $x=0$ and $x=1$, except the first polynomial at $x=0$ and the last one at $x = 1$, which are equal to 1. It can provide the flexibility applicable to impose boundary conditions at the end points of the interval.\\
We present the solutions to linear and nonlinear differential equations as
linear combinations of these polynomials $P(x)=\sum_{i=0}^{m} c_{i}B_{i,m}(x)$ and the coefficients $c_{i}$ are determined using the Galerkin method.
In recent years, the B-polynomials have attracted the attention of many researchers.
These polynomials have been utilized for solving different equations by using various approximate methods.
B-polynomials have been used for solving Fredholm integral equations \cite{Chakrabarti.Martha,Mandal.Bhattacharya}, Volterra integral equations \cite{S.Bhattachary.B.Mandal}, Volterra-Fredholm-Hammerstein integral equations \cite{Maleknejad.Hashemizadeh.Basirat.2011},differential equations \cite{S.A.Yousefi.Behroozifar.2010,Singh.Singh.Singh.2009,D.D.Bhatta.Bhatti.2006,M.I.Bhatti.P.Bracken.2007}, integro-differential equations \cite{S.Bhattacharya.Mandal.2008}, parabolic equation subject to specification of mass \cite{S.A.Yousefi.Behroozifar.Dehghan.2010} and so on.
Singh et al. \cite{Singh.Singh.Singh.2009} and Yousefi et al. \cite{S.A.Yousefi.Behroozifar.2010} have proposed operational matrices in different ways for solving differential equations.
In \cite{Singh.Singh.Singh.2009}, the B-polynomials have been firstly orthonormalized using Gram-Schmidt orthonormalization process and then the operational matrix of integration has been obtained.
By the expansion of B-polynomials in terms of Taylor basis, Yousefi and Behroozifar have
found the operational matrices of  differentiation, integration and product of B-polynomials.
\subsection{EOM-related matrices}
\subsubsection{B-polynomials}As we mentioned, $m$-degree B-polynomials\cite{M.I.Bhatti.P.Bracken.2007} are a set of polynomials defined on $[0,1]$:
\begin{eqnarray}\nonumber
B_{i,m}(x)= \binom{m}{i}x^i(1-x)^{m-i}~,~~~~~~~~0\leq i\leq m~,
\end{eqnarray}
where $\binom{m}{i}$ means
\begin{eqnarray}\nonumber
\frac{m!}{i!(m-i)!}~,
\end{eqnarray}
In this paper, we use the $\psi_m(x)$ notation to show
\begin{eqnarray}\nonumber
\psi_m(x)=\left[\begin{matrix}B_{0,m}(x)&B_{1,m}(x)&\cdots&B_{m,m}(x)\cr\end{matrix}\right]^T~,
\end{eqnarray}
We should remember that\cite{S.A.Yousefi.Behroozifar.2010}:
\begin{eqnarray}\label{matrixA}
\psi_m(x)= A_m\times T_m(x),
\end{eqnarray}
where
\begin{eqnarray}\label{TmX}
T_m(x)=\left[\begin{matrix}x^0&x^1&\cdots &x^m\cr\end{matrix}\right]^T~,
\end{eqnarray}
and the $(i+1)^{\text{th}}$ row of matrix $A$ is
\begin{eqnarray}\label{adiffenetion}
A_{i+1}= \begin{bmatrix}
\overbrace{\begin{matrix}
0& 0& \cdots& 0
\end{matrix}}^{\text{i times}}& (-1)^0\binom{m}{i}&(-1)^1\binom{m}{i}\binom{m-i}{1}&\cdots&(-1)^{m-i}\binom{m}{i}\binom{m-i}{m-i}~\cr
\end{bmatrix}~.
\end{eqnarray}
Matrix $A$ is an upper triangular matrix and $\mathrm{det}(A)=\prod_{i=0}^{m}\binom{m}{i}$
; So, $A$ is an invertible matrix. \cite{hossayn.tjmcs} has obtained a relation for obtaining the $A^{-1}$.
\begin{eqnarray}\label{ainverse}
\{\mathbf{A}^{-1}\}_{i,j=0}^{m}
=
\begin{cases}
\frac{{m-i \choose j-i}}{{m \choose j}}~, & j \geq i~,\\
0~, & j < i~.
\end{cases}
\end{eqnarray}
Now, that is the turn to familiarize the reader with the exact operational matrices, introduced in \cite{hossayn.tjmcs}.
\subsubsection{A general formula for $x^i$}
The term $x^i$ is a very common term in differential equations. So, the authors of \cite{hossayn.tjmcs} have proposed a general formula for $x^i$ to be written as linear combination of the Bernstein polynomials
\begin{eqnarray}
&x^i= d_{i,m}^T\psi_m(x)~,~~~~~~m\geq i~, \label{dM}
\end{eqnarray}
\begin{eqnarray*}
&d_{i,m}=\left(\begin{bmatrix}
\overbrace{\begin{matrix}
0&0&\cdots&0\cr
\end{matrix}}^{i}&1&\overbrace{\begin{matrix}
0&0&\cdots&0\cr
\end{matrix}}^{m-i}
\end{bmatrix}A_m^{-1}\right)^T~.
\end{eqnarray*}
\subsubsection{K matrices}
Firstly, \cite{hossayn.tjmcs} has introduced two simple matrices and named them the K-matrices:
\begin{eqnarray}\label{Kmi}
K_{m,i}=\left[\begin{matrix}I_m&0_{m\times i}\cr\end{matrix}\right]_{m\times m+i}~,\\
K^\prime_{m,i}=\left[\begin{matrix}0_{m\times i}&I_m\cr\end{matrix}\right]_{m\times m+i}~.\label{Kpmi}
\end{eqnarray}
\subsubsection{The increaser matrix}\label{S_increaserMatrix}
Suppose that we want to solve the differential equation (\ref{supN0}), using the Galerkin method. To implement (\ref{jaygMWR}) by EOMs, we apply EOMs into the equation and sum all of the terms together to reach the residual function.
To be able to factor out a base vector from all of the different $b_n(x)$-sized terms in the residual function, \cite{hossayn.tjmcs} has introduced the so-called increaser matrix $E_{i,j}$, by which:
\begin{eqnarray}\nonumber
b_i(x)= E_{i,j}.b_j(x).
\end{eqnarray}
Using this matrix, we can convert the base vector existing in each term to the biggest-sized $b_{maxNum}(x)$. By factoring out the $b_{maxNum}(x)$, we can write the residual function as $R_{maxNum}b_{maxNum}(x)$.
So, to solve the problem, we should solve the following:
\begin{align}\label{baseRes}
&R_{{maxNum}}b_{maxNum}(x)= 0.&&{maxNum}\geq m
\end{align}
\cite{hossayn.tjmcs} has, also, obtained the increaser matrix for the Bernstein base functions
\begin{align}\label{Emi}
&\psi_m(x)= E_{m,i}\psi_{m+i}(x)~.\\\nonumber
&E_{m,i}= A_mK_{m+1,i}A_{m+i}^{-1}~.
\end{align}
The $E_{m,i}$ matrix size is $(m+1)\times (m+1+i)$.
\subsubsection{The differentiation matrix}
$D_m$ is the operational matrix of differentiation for the Bernstein base functions, introduced in \cite{hossayn.tjmcs}
\begin{align}
&\frac{d}{dx}\psi_m(x)= D_m\psi_{m-1}(x)~,\label{Dm}\\\nonumber
&D_m= m~\Bigg({K'}_{m,1}^T- K_{m,1}^T\Bigg)~,
\end{align}
\subsubsection{The integration Matrix}
\cite{hossayn.tjmcs} has introduced the Bernstein polynomials integration operational matrix $P_m$ as follows
\begin{eqnarray}\label{Pmatrix}
\int_0^x{\psi_m(x)dx}= P_m\psi_{m+1}(x)~,
\end{eqnarray}
where
\begin{align*}
&P_m= \begin{bmatrix}
p_{0,m}&\cdots& p_{m,m}
\end{bmatrix}_{(m+1)\times (m+2)}^T~.\\
&p_{i,m}= \frac{1}{m+1}\begin{bmatrix}
\overbrace{\begin{matrix}0&\cdots &0\cr\end{matrix}}^{i+1}\overbrace{\begin{matrix}1&\cdots&1\cr\end{matrix}}^{m+1-i}
\end{bmatrix}^T~,
\end{align*}
\subsubsection{The product matrix}
For an arbitrary vector $c$, we can write:
\begin{eqnarray}\label{cTilde}
c^T\psi_m(x)\psi_n^T(x)= \psi_{m+n}^T(x)\times \tilde{C}_{m,n}~,
\end{eqnarray}
where $\tilde{C}_{m,n}$ is an $(m+n+1)\times (n+1)$ product operational matrix for the vector $c$, introduced in \cite{hossayn.tjmcs}
\begin{eqnarray}
\tilde{C}_{m,n}= \begin{bmatrix}
c_0a_{0,0,m,n}&0&\cdots&\cdots&\cdots& 0\cr
c_1a_{1,0,m,n}&c_0a_{0,1,m,n}&\ddots&\ddots&\ddots&\vdots\cr
\vdots&\ddots&\ddots&\ddots&\ddots&\vdots\cr
c_ja_{j,0,m,n}&c_{j-1}a_{j-1,1,m,n}&\ddots& c_0a_{0,j,m,n}&\ddots&\vdots\cr
\vdots&\ddots&\ddots&\ddots&\ddots&0\cr
c_ma_{m,0,m,n}&c_{m-1}a_{m-1,1,m,n}&\ddots& c_{m-j+1}a_{m-j+1,j,m,n}&\ddots&c_0a_{0,n,m,n}\cr
0& c_ma_{m,1,m,n}&\ddots &c_{m-j+2}a_{m-j+2,j,m,n}&\ddots &c_1a_{1,n,m,n}\cr
\vdots&\ddots&\ddots&\ddots&\ddots&\vdots\cr
\vdots&\ddots&\ddots&c_ma_{m,j,m,n}&\ddots& c_{j-1}a_{j-1,n,m,n}\cr
\vdots&\ddots&\ddots&\ddots&\ddots&\vdots\cr
0&\cdots&\cdots&\cdots&0&c_ma_{m,n,m,n}
\end{bmatrix}
\end{eqnarray}
or
\begin{eqnarray}\nonumber
\left[\tilde{C}_{m,n}\right]_{i,j}= \begin{cases}
0~,&i\not\in[j,j+m]~,\cr
c_{i-j}a_{(i-j),(j-1),m,n}~,&o.w
\end{cases}
\end{eqnarray}
Moreover, by transposing (\ref{cTilde}), we have:
\begin{eqnarray}\label{cHat}
\psi_n(x)\psi_m^T(x) c= \widehat{C_{n, m}}\psi_{m+n}(x)~,
\end{eqnarray}
\begin{eqnarray}\nonumber
\widehat{C_{n,m}}= \tilde{C}_{m,n}^T~,
\end{eqnarray}
$\widehat{C_{n,m}}$ is, also, called the product operational matrix for the vector $c$ \cite{hossayn.tjmcs}.
\subsubsection{The power matrix}
Suppose that $y(x)=c^T\psi_m(x)$; \cite{hossayn.tjmcs} has introduced $\overline{C_{m,p}}$ operational matrix by which $y^{p}(x)=  \overline{C_{m,p}}\phi_{p\cdot m}(x)$ and named it the power operational matrix for the Bernstein polynomials
\begin{align}\label{overL}
&\overline{C_{m,p}}= c^T\prod_{i=1}^{p-1}{\widehat{C_{i\cdot p,m}}}~,&&p\geq 2~.
\end{align}
\subsubsection{The Series matrix}
Suppose that $y(x)= c^T\psi_m(x)$; we are to propose a matrix to approximate $f\left(y(x)\right)$ function, where power series of $f(x)$ can be written as:
\begin{align*}
f(x)&= \sum_{i=0}^\infty{e_i x^{i}}&&|x|< R\\
&\simeq \sum_{i=0}^N{e_i x^{i}}&&|x|< R\\
f\left(y(x)\right)\simeq& \sum_{i=0}^N{e_i\left(c^T\psi_m(x)\right)^{i}}&&\left| y(x)\right| < R&\\
&\overset{\ref{overL}}{=} e_0\cdot d_{0,m\cdot N}^T\psi_{m\cdot N}+ e_1\cdot c^T\psi_m(x)+ \sum_{i=2}^N{e_i\overline{C_{m,i}}\psi_{m\cdot i}(x)}&&\left| y(x)\right| < R,\text{\tab}N\geq 2\\
&\overset{\ref{Emi}}{=} \left(e_0\cdot d_{0,m\cdot N}^T+ e_1\cdot c^TE_{m,m(N-1)}+ \sum_{i=2}^N{e_i\overline{C_{m,i}}E_{m\cdot i, m(N- i) }}\right)\psi_{m\cdot N}(x)&&\left| y(x)\right| < R,\text{\tab}N\geq 2\\
\end{align*}
\begin{align}
\Longrightarrow f\left(y(x)\right)&\simeq \overbrace{C_{e_i, m, N}}\psi_{m\cdot N}(x),\label{MLM}&&\left| y(x)\right| < R&N\geq 2\\
\overbrace{C_{e_i, m, N}}&= e_0\cdot d_{0,m\cdot N}^T+ e_1\cdot c^TE_{m,m(N-1)}+ \sum_{i=2}^N{e_i\overline{C_{m,i}}E_{m\cdot i, m(N- i) }}\nonumber
\end{align}
In the next subsection, the optimal $e_i$ coefficients are obtained.
\subsubsection{Truncated Taylor series}
In the last subsection, we put the base of the series matrix on the power series.
We know that $f(x)= \sum_{i=0}^{\infty}c_ix^i,~c_i=\frac{f^{(i)}}{i!}$; but after truncating the series to $N$ terms, the $c_i$s would not be, necessarily, the best coefficients to approximate $f(x)$.
To reach the best coefficients, we need the following theorem
\begin{thm}
Consider the Hilbert space $H= L^2[a,b]$ and one of its finite-dimensional subspaces:
\begin{align*}
F=Span\{f_0(x), f_1(x), \cdots, f_N(x)\},
\end{align*}
with inner product defined by
\begin{align*}
\langle f(t), g(t)\rangle =\int_a^b{f(t)g(t)dt}.
\end{align*}
\begin{enumerate}
\item
For any arbitrary function $q(x)\in H$, there exists a unique best approximation $p(x)$ (in respect to the defined inner product) for the $q(x)$ function
\item
\begin{align*}
&\langle \left(q(x)- p(x)\right) , f \rangle= 0, \\
&f= \begin{bmatrix}
f_0(x)&f_1(x)&\cdots &f_N(x)\cr
\end{bmatrix}^T
\end{align*}
where
\begin{align*}
\langle f(x), f \rangle= \begin{bmatrix}
\langle f(x), f_0(x) \rangle &\langle f(x), f_1(x) \rangle &\cdots &\langle f(x), f_N(x) \rangle\cr
\end{bmatrix}.
\end{align*}
\end{enumerate}
\end{thm}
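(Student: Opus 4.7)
The plan is to reduce both claims to a finite-dimensional optimization problem by parametrizing elements of $F$, and then to use the Pythagorean identity in $H$ to derive orthogonality and uniqueness. Without loss of generality I may assume the $f_i$ form a basis of $F$ (otherwise pass to a basis of $F$; the statement and the stated orthogonality against every $f_i$ are both preserved under this reduction, since any $f_i$ is a linear combination of the chosen basis).

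For existence, I would parametrize $p(x)=\sum_{i=0}^N c_i f_i(x)$ and consider the real-valued map
\[
\Phi(c)=\|q-p\|^2=\|q\|^2-2\,c^T b+c^T G c,
\]
where $b_i=\langle q,f_i\rangle$ and $G_{ij}=\langle f_i,f_j\rangle$ is the Gram matrix of the basis. Linear independence of the $f_i$'s makes $G$ symmetric positive definite, so $\Phi$ is a strictly convex, coercive quadratic on $\mathbb{R}^{N+1}$ and attains a unique minimizer $c^\ast$. Setting $p(x)=\sum_i c^\ast_i f_i(x)$ yields the candidate best approximation.

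For the orthogonality characterization, I would appeal to the first-order condition $\nabla\Phi(c^\ast)=0$, which is exactly $G c^\ast=b$, i.e., $\langle p,f_j\rangle=\langle q,f_j\rangle$ for every $j$. Written componentwise and collected into a vector this is precisely $\langle q-p,f\rangle=0$. Conversely, if $p\in F$ satisfies this orthogonality relation, then for any $\tilde p\in F$ one has $p-\tilde p\in F$, so $q-p\perp p-\tilde p$, and the Pythagorean identity gives
\[
\|q-\tilde p\|^2=\|(q-p)+(p-\tilde p)\|^2=\|q-p\|^2+\|p-\tilde p\|^2\ge \|q-p\|^2,
\]
with equality iff $\tilde p=p$. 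This simultaneously reconfirms that $p$ is the best approximation and shows uniqueness directly, closing the loop with part~(1).

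I do not anticipate a serious obstacle: the only subtlety is the initial reduction to a basis, since the statement quantifies orthogonality against a spanning list $\{f_0,\dots,f_N\}$ that might in principle be dependent, and one must check that orthogonality against an independent sub-basis implies orthogonality against all the $f_j$'s — but this is immediate from linearity of the inner product.
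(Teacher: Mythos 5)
Your proof is correct and complete: the reduction to a basis of $F$, the strict convexity and coercivity of the quadratic $\Phi$ coming from positive definiteness of the Gram matrix $G$, the normal equations $Gc^{\ast}=b$ as the first-order (orthogonality) condition, and the Pythagorean identity for the converse direction and for uniqueness are all standard and sound. The main point of comparison is that the paper does not actually prove this theorem at all --- its ``proof'' consists solely of a reference to an external source --- so there is no in-paper argument to measure yours against; your self-contained derivation via the normal equations and the Pythagorean identity is the classical route and supplies exactly what the paper omits. Two minor remarks. First, equality in your final inequality forces $\tilde p=p$ only as elements of $L^{2}[a,b]$, i.e.\ almost everywhere, which is the correct notion of uniqueness in this Hilbert space but is worth saying. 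Second, the theorem statement leaves implicit that the ``best approximation'' is sought within $F$ and measured in the norm induced by the stated inner product; your proof makes the right reading of this, but you should state it explicitly before introducing $\Phi$, since otherwise the object being minimized is undefined.
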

\begin{proof}
Refer to \cite{S.A.Yousefi.Behroozifar.2010}
\end{proof}
Consider the Hilber space $H=L^2[a,b]$.
Approximating the function $f(x)\in H$ by a truncated series in the interval $\begin{bmatrix}
a& b\cr
\end{bmatrix}$ means approximating it by a function $g(x)$ in the $T_m$ subspace of $H$:
\begin{align*}
T_m=Span\{1, x, \cdots, x^N\},
\end{align*}
Where
\begin{align*}
&g(x)= e^Tt_N,\\
&e= \begin{bmatrix}
e_0&e_1&\cdots &e_N\cr
\end{bmatrix}^T,\\
&t_N= \begin{bmatrix}
1&x&x^2&\cdots &x^N\cr
\end{bmatrix}^T
\end{align*}
Using the theorem, we have:
\begin{align*}
&\langle \left(f(x)- e^Tt_N\right) , t_N \rangle= 0\\
&e^T\langle t_N, t_N \rangle= \langle f(x), t_N\rangle
\end{align*}
$\langle t_N, t_N\rangle$ is an $(N+1)\times (N+1)$ matrix and is said dual matrix of $t_N$. Let:
\begin{align*}
U= \langle t_N, t_N\rangle= \int_a^bt_Nt_N^Tdx,
\end{align*}
Then,
\begin{align}\label{eFinder}
e^T= \left(\int_a^b f(x)t_N^T(x)dx\right)U^{-1},
\end{align}
So, the best approximation for $f(x)$ would be:
\begin{align*}
f(x)= \sum_{i=0}^N e_ix^i,
\end{align*}
where $e_i$s are the elements of the vector $e$, obtained above.
\subsubsection{The Q Matrix}\label{S_Q}
In the subsection \ref{S_increaserMatrix}, we converted the solution of the differential equation (\ref{supN0}) to the solution of the algebraic equations system (\ref{baseRes}). In (\ref{baseRes}), $b_{maxNum}(x)$ is a base vector and its functions are linearly-independent; so we can solve the following equation instead:
\begin{align}\nonumber
&R_{maxNum}= 0&&{maxNum}\geq m
\end{align}
To overcome the problems of solving a system with ${maxNum}$-equations and $m$ unknown variables, \cite{hossayn.tjmcs} has introduced the Galerkin matrix $Q_{{maxNum},m}$, which reduces the number of equations to $m$, based on the Galerkin method:
\begin{eqnarray}\label{ress}
R_{1\times ({maxNum}+1)}\times Q\left({{maxNum}, m}\right)= 0~,
\end{eqnarray}
where $Q\left({{maxNum}, m}\right)$ is an $({maxNum}+ 1) \times (m+ 1)$ matrix
\begin{align*}
Q\left({{maxNum}, m}\right)&= [q_{ij}]_{{maxNum}+1\times m+1},&&i\leq {maxNum}\\
& &&,~j\leq m~,\\
q_{ij}&= \frac{\binom{{maxNum}}{i-1}\binom{m}{j-1}\left({maxNum}+m+2-(i+j)\right)!+\left(i+j-2\right)!}{\left({maxNum}+m+1\right)!}~.
\end{align*}
then, using Eq. (\ref{ress}), we solve
\begin{eqnarray}\label{residual}
R^*_m= 0~,
\end{eqnarray}
where
\begin{eqnarray}\nonumber
R^*_m= R_{1\times ({maxNum}+1)}\times Q\left({{maxNum}, m}\right)~.
\end{eqnarray}
By solving the obtained algebraic system, we will find the $m+1$ unknown coefficients $c_i$, in \ref{takhMWR}, and, finally, find the $y_m(x)$.
\section{Applications}\label{S_application}
To show the efficiency and accuracy of the present method on ordinary differential equations, seven examples are presented.
The exact solution is not available for most of them;
therefore, we solved them using a seventh-eighth order continuous Runge-Kutta method as an almost exact solution, using the Maple$^\copyright$ dverk78 function, and compared them with the EOM method results.
In order to reach the approximated solutions, we applied the present method for different $m$ valued $y_m(x)$s in (\ref{takhMWR}).
We have, also, compared them with their (almost) exact solutions and computed the norm1 for the residual and the error function of each one.\\
The numerical implementation and all of the executions are performable by maplesoft$^\copyright$.maple.16.x64, 64-bit Microsoft$^\copyright$ Windows7 Ultimate Operating System, alongside hardware configuration: Laptop 64-bit Core i3 M380 CPU, 8 GBs of RAM.
\subsection{Solution of the Emden fowler equations}
Rewriting the Emden-Fowler equation (\ref{EmFw}), we would have
\begin{align}\label{rEmFw}
N\left[y(x)\right]= &xy''(x)+ 2y'(x)+ xf(x)g\left(y(x)\right)= 0,&&0\leq x\leq \lim_{M\rightarrow \infty}M&\\
&y(0)= a\text{~~}y'(0)= b\nonumber
\end{align}
Neglecting the large values of $M$, we can solve the equation in the $0\leq x\leq M$ interval.
So, we solve the problem in the domain $[0,M]$; but the Bernstein polynomials domain is [0,1] and we are not able to solve the problem, yet.
One approach to solve this problem, is to change the variables, using the following mapping
\begin{eqnarray}\nonumber
s=\frac{x}{M}~,~~~~~~~~v(s)= \frac{y(x)}{M}~.
\end{eqnarray}
Then, we have
\begin{align*}
&y(x)= Mv(s)\\
&\frac{d}{dx}y(x)= \frac{d}{ds}v(s)\\
&\frac{d^2}{dx^2}y(x)= \frac{1}{M}\frac{d^2}{ds^2}v(s)\\
&v(0)= \frac{a}{M}\\
&v'(0)= b\\
\end{align*}
Now, rewriting the equation (\ref{rEmFw}), we have
\begin{align*}
N\left[y(x)\right]= &sv''(x)+ 2v'(s)+ sMf\left(sM\right)g\left(Mv(s)\right)= 0,&&0\leq s\leq 1&s= \frac{x}{M}\\
&v(0)= \frac{a}{M}\text{~~}v'(0)= b\nonumber
\end{align*}
First of all, suppose that $z''(s)$ is an approximation for $v''(s)$
\begin{align}\nonumber
z''(s)= c^T\psi_m(s)\Longrightarrow&\begin{cases}
sz''(s)\overset{\ref{dM}}{=} c^T\psi_m(s)\psi_1^T(s)d_{1,1}\overset{\ref{cHat}}{=} c^T\widehat{\left(D_{1,1}\right)_{m,1}}\psi_{m+1}(s)\\
z'(s)- z'(0)= c^TP_m\psi_{m+1}(s)
\end{cases}\\
\Longrightarrow&z'(s)= g^T\psi_{m+1}(s)\nonumber\\
&g\overset{\ref{dM}}{=} P_m^Tc+ b\cdot d_{0,m+1}\nonumber\\
\Longrightarrow&z(s)- z(0)= g^TP_{m+1}\psi_{m+2}(s)\nonumber\\
\Longrightarrow&z(s)= h^T\psi_{m+2}(s)\label{EmFwZs}\\
&h\overset{\ref{dM}}{=} P_{m+1}^Tg+ \frac{a}{M}\cdot d_{0,m+2}\nonumber
\end{align}
Depending on the Emden-Fowler problem which we are to solve, there exist two vectors $k$ and $s(c)$, ($s(c)$ elements depend on the vector $c$ elements) and integers $i$ and $j$ by which we can express $sMf(sM)= k^T\psi_{i}(s)$ and $g\left(Mz(s)\right)= s^T(c)\psi_j(s)$. So we can write
\begin{align}\label{3pEmFw}
&g\left(Mz(s)\right)\cdot sMf(sM)= s^T(c)\psi_j(s)\psi_i^T(s)k\overset{\ref{cHat}}{=} s^T(c)\widehat{K_{j,i}}\psi_{i+j}(s)
\end{align}
Using (\ref{rEmFw}), we can write the Residual$(s)$
\begin{align}
\text{Residual}(s)&= N\left[z(s)\right]\nonumber\\
&= sz''(s)+ 2z'(s)+ sMf(sM)g\left(Mz(s)\right)\nonumber\\
&\overset{\ref{3pEmFw}}{=} c^T\widehat{\left(D_{1,1}\right)_{m,1}}\psi_{m+1}(s)+ 2g^T\psi_{m+1}(s)+ s^T(c)\widehat{K_{j,i}}\psi_{i+j}(s)\nonumber\\
&= R_{1\times{maxNum}}\psi_{{maxNum}}(s)&{maxNum}= \text{max}(m+1, i+j)
\end{align}
\begin{align}
R&\overset{\ref{Emi}}{=} \begin{cases}
c^T\widehat{\left(D_{1,1}\right)_{m,1}}E_{m+1,i+j-(m+1)}+ 2g^TE_{m+1,i+j-(m+1)}+ s^T(c)\widehat{K_{j,i}}&i+j> m+1\\ \\
c^T\widehat{\left(D_{1,1}\right)_{m,1}}+ 2g^T+ s^T(c)\widehat{K_{j,i}}E_{i+j,m+1-(i+j)} &i+j< m+1\\ \\
c^T\widehat{\left(D_{1,1}\right)_{m,1}}+ 2g^T+ s^T(c)\widehat{K_{j,i}}&i+j= m+1
\end{cases}\label{ResEmFw}
\end{align}
Using (\ref{residual}), we solve the following system to find the unknown $c_i$s (elements of the vector $c$)
\begin{align}\label{finalSystem}
R_m^*= R\times Q\left({{maxNum}}, m\right)= 0
\end{align}
As it is obvious from (\ref{ResEmFw}), the only remaining step to gain residual$(s)$ is to determine the vectors $s(c)$ and $k$. Looking at the equations (\ref{EsLE})-(\ref{EeEmFw}), we can write
\begin{align*}
&sMf(sM)= \begin{cases}
M\cdot d_{1,1}^T\psi_1(s)&(\ref{EsLE})-(\ref{EteLE})\\
-2M\left(2M^2d_{3,3}^T+3d_{1,3}^T\right)\psi_{3}(x)&(\ref{EeEmFw})
\end{cases}
\end{align*}
$k$ vectors were found for all of the equations (\ref{EsLE})-(\ref{EeEmFw}).
Now, we have to find the $s(c)$ vectors.
Using (\ref{EmFwZs}), besides (\ref{overL}), we can write
\begin{align*}
&g\left(Mz(s)\right)= \begin{cases}
M^pz^p(s)\overset{\ref{overL}}{=} M^p\overline{H_{m+2, p}}\psi_{(m+2)p}(s)&p\in\mathbb{N}\text{\tab\tab~~~~~~}(\ref{EsLE})\\
Mz(s)= Mh^T\psi_{m+2}(s)&\text{\tab\tab\tab\tab~}(\ref{EeEmFw})
\end{cases}
\end{align*}
for other $g$ functions $(\ref{ErsLE})-(\ref{EteLE})$, Using (\ref{EmFwZs}), besides (\ref{MLM}) and (\ref{eFinder}), we can write
\begin{align*}
g\left(y(x)\right)\simeq \overbrace{H_{e_i, m, N}}\psi_{(m+2)N}(x),
\end{align*}
where $e_i$s are the elements of vector
\begin{align*}
e\overset{\ref{eFinder}}{=} \left(U^{-1}\right)^T\int_a^bg(x)t_N(x)dx.
\end{align*}
Now, we are able to solve the (\ref{finalSystem}) system and gain the vector $c$. After finding the vector $c$, we would have the vector $h$ in (\ref{EmFwZs}); so
\begin{align*}
y(x)&= Mv(s)\\
&\simeq Mz(s)\\
&= Mh^T\psi_{m+2}(s)
\end{align*}
It is worth mentioning that it is too difficult to solve the (\ref{finalSystem}) system of nonlinear equations even by the Newton's method when the number of algebraic equations rises up; the main difficulty with such systems is how to choose the initial guess to handle the Newton method such that it results in low order computations. So, we applied a technique, introduced in \cite{hossayn.tjmcs}, to choose an appropriate initial guess.\\
However, we solved all of the (\ref{EsLE})-(\ref{EeEmFw}) problems, using the Maple$^\copyright$ mathematical software. The results are depicted in the figures (\ref{Fig_sXM1_8_8_s})-(\ref{Fig_sin_5_12_last}).
All of the problems, except the (\ref{EseLE}) are solved for several $m$ values
\begin{align*}
m\in \{2,3,\cdots,E\}
\end{align*}
Where, E is the largest $m$ for which the results are reported.
To evaluate the solution accuracy for each $m$, it is not needed to check them one by one;
reporting the norm of the residual and the error functions is enough for evaluation;
so, we present two categories of figures. The first category contains the figures which report the norm of the residual or the error function, for all of the $m$ values, and the second category contains the plots of the error or the residual function for the largest $m$ value ($E$),\\
Looking at the first category plots, we find out that both of the residual and the error function values decreases by increasing the $m$ value which confirms the convergence of both methods.
Also, they demonstrate the absolute priority of EOM.
the second category of plots shows that almost for all $x$ values, both of the residual and the error functions values of the EOM method are less than OOM.
Meanwhile, the second category plots are semi-logarithmic, because, according to the high superiority of EOMs to OOMs, the error of the EOM method results can not be seen obviously, in non-logarithmic plots.\\
To have a good approximation of $g(y(x))$, we have used some large $N$ values like 8, 10, 12 and 15.
The acceptable results accuracy shows the efficient choices for $N$.\\
Before explaining the figures one by one, we should remind that the problem is solved in the interval $x\in [0, M]$. The question is what value we should choose for $M$?
Like the previous researches, we are only interested in the positive $y(x)$ values (we only solve the problems with positive $y(0)$ values).
Remember that we have the (almost) exact solutions for all of the problems.
Accordingly, we chose the $M$ as the point at which $y(M)=0$.
But if $M>5$, then we set $M=5$.\\
Here, we explain the plots one by one.
The analytical solution of the equation (\ref{EsLE}) for $y^0(x)$ is so that we can exactly write it as a linear combination of the Bernstein polynomials; so, we do not report its figures. But, reports of the problem for $y^1(x)$ and $y^5(x)$ are depicted in the figures \ref{Fig_sXM1_8_8_s},\ref{Fig_sXM1_8_8_last},\ref{Fig_sXM5_8_8_s},\ref{Fig_sXM5_8_8_last}.
The figures \ref{Fig_42exp_5_10_s},\ref{Fig_42exp_5_10_last} show less accuracy than other problems.
This shows that $N$ value is not sufficiently large to approximate $g(y(x))$, well.
However, these kinds of results are appropriate to show the truncated series performance in function approximation.
In the figures \ref{Fig_2LM_8_8_s},\ref{Fig_2LM_8_8_last}, which contains the (\ref{EeEmFw}) problem results, the analytical solution is compared with both methods approximate solutions and, obviously, shows the preference of the EOMs to OOMs, as for the previous problems.
The problem (\ref{ErsLE}) is solved twice, one time for $g(y(x))= y^{\frac{3}{2}}(x)$ and the second time for $g(y(x))= y^{\frac{5}{2}}(x)$ and the results are depicted in the figures \ref{Fig_xM1A5_5_12_s},\ref{Fig_xM1A5_5_12_last},\ref{Fig_xM2A5_5_12_s},\ref{Fig_xM2A5_5_12_last}.
The figures \ref{Fig_xM1A5_5_12_last} and \ref{Fig_xM2A5_5_12_last} shows the ineligible results for the OOM method residual function.
For example, in $x=0$, the error is even more than 1, while, even in those points, the EOM method presents some suitable results.
The interesting point in the solution of the problem (\ref{EseLE}) for $m=2$, which is observable in the figure \ref{Fig_exp_res_2_15_last}, is the ability to reach such high accuracy approximations, even for such a small $m$, by choosing a large-enough value for $N$, which in this problem is 15.
The figures \ref{Fig_sinh_5_10_s},\ref{Fig_sinh_5_10_last},\ref{Fig_sin_5_12_s},\ref{Fig_sin_5_12_last} as solution figures of the problems (\ref{EshLE}) and (\ref{EsinLE}) again, show some almost ineligible results for OOM method, while EOMs have some suitable results.
\begin{figure}
\centering
\subfigure[$\fNorm{Error\left(y_m(x)\right)}_1$]{
\centering
\includegraphics[scale=0.4]{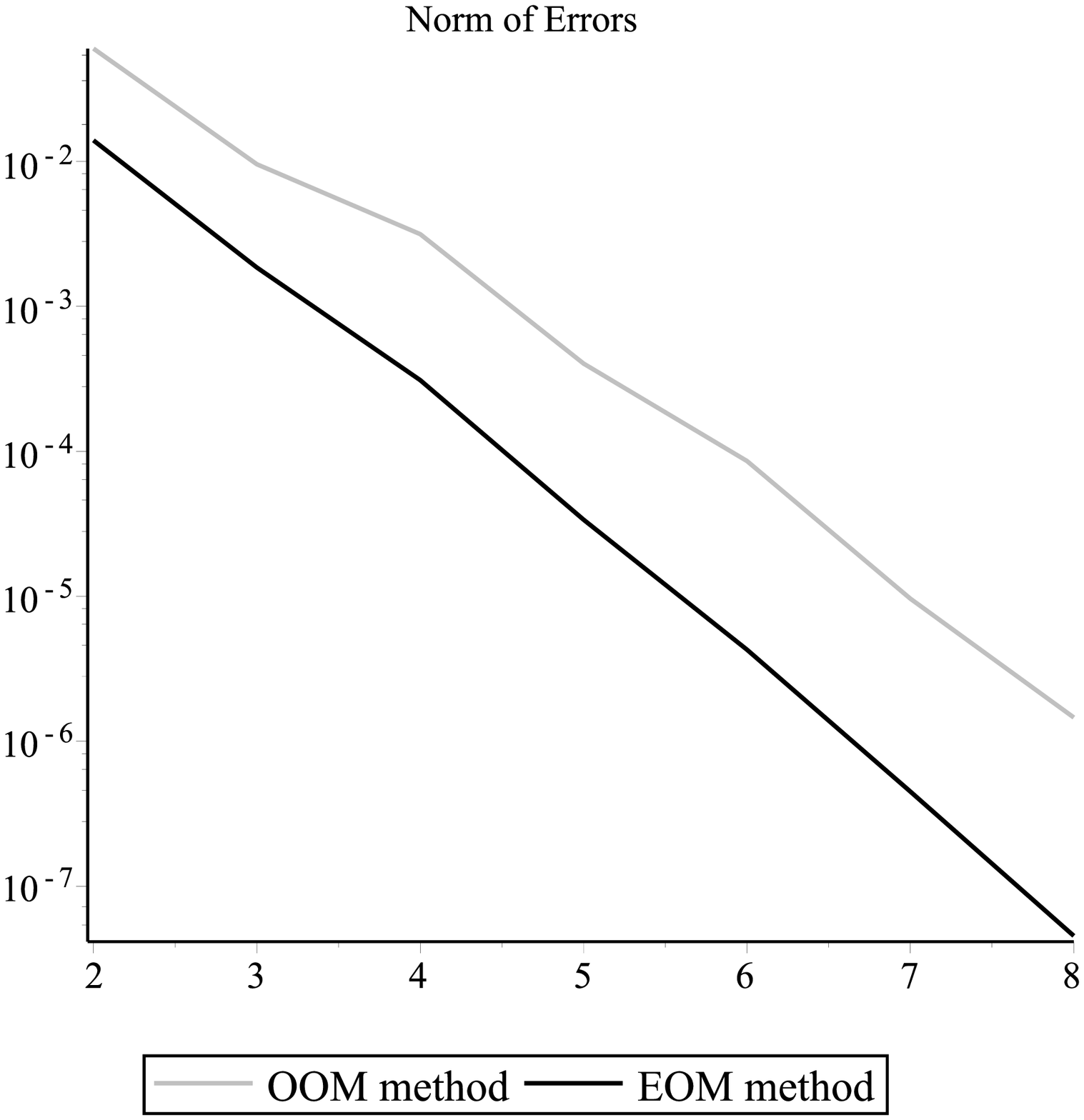}
\label{Fig_sXM1_8_8_errs}
}
\subfigure[$\fNorm{Residual\left(y_m(x)\right)}_1$]{
\centering
\includegraphics[scale=0.4]{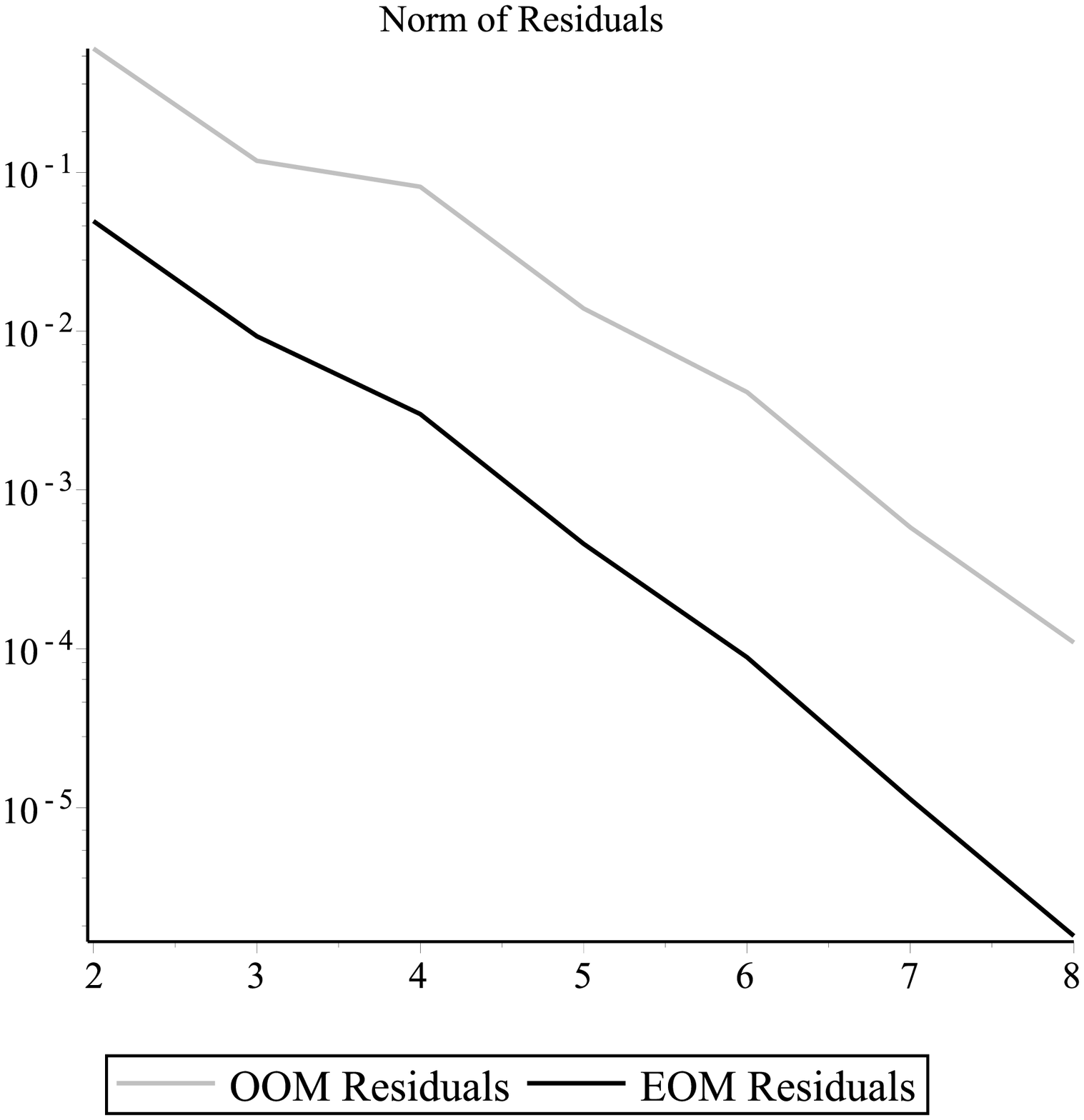}
\label{Fig_sXM1_8_8_ress}
}
\caption{
the norm1 of the residual and error function plots for several $m$ values and $f(x)= 1,~g(x)= y(x),~N=8$}
\label{Fig_sXM1_8_8_s}
\end{figure}
\begin{figure}
\centering
\subfigure[$\fNorm{Error\left(y_M(x)\right)}_1$]{
\centering
\includegraphics[scale=0.4]{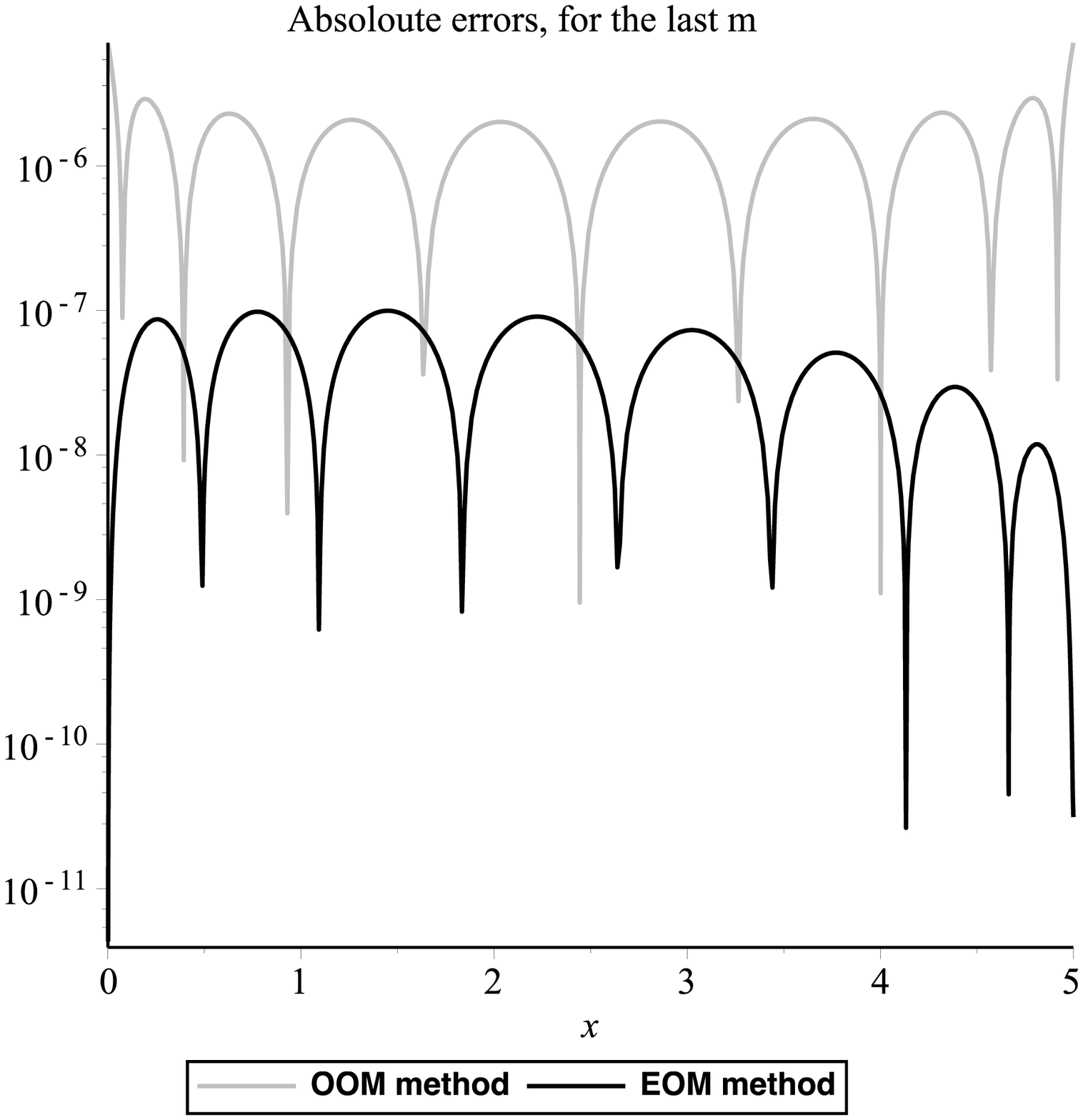}
\label{Fig_sXM1_8_8_last_err}
}
\subfigure[$\fNorm{Residual\left(y_M(x)\right)}_1$]{
\centering
\includegraphics[scale=0.4]{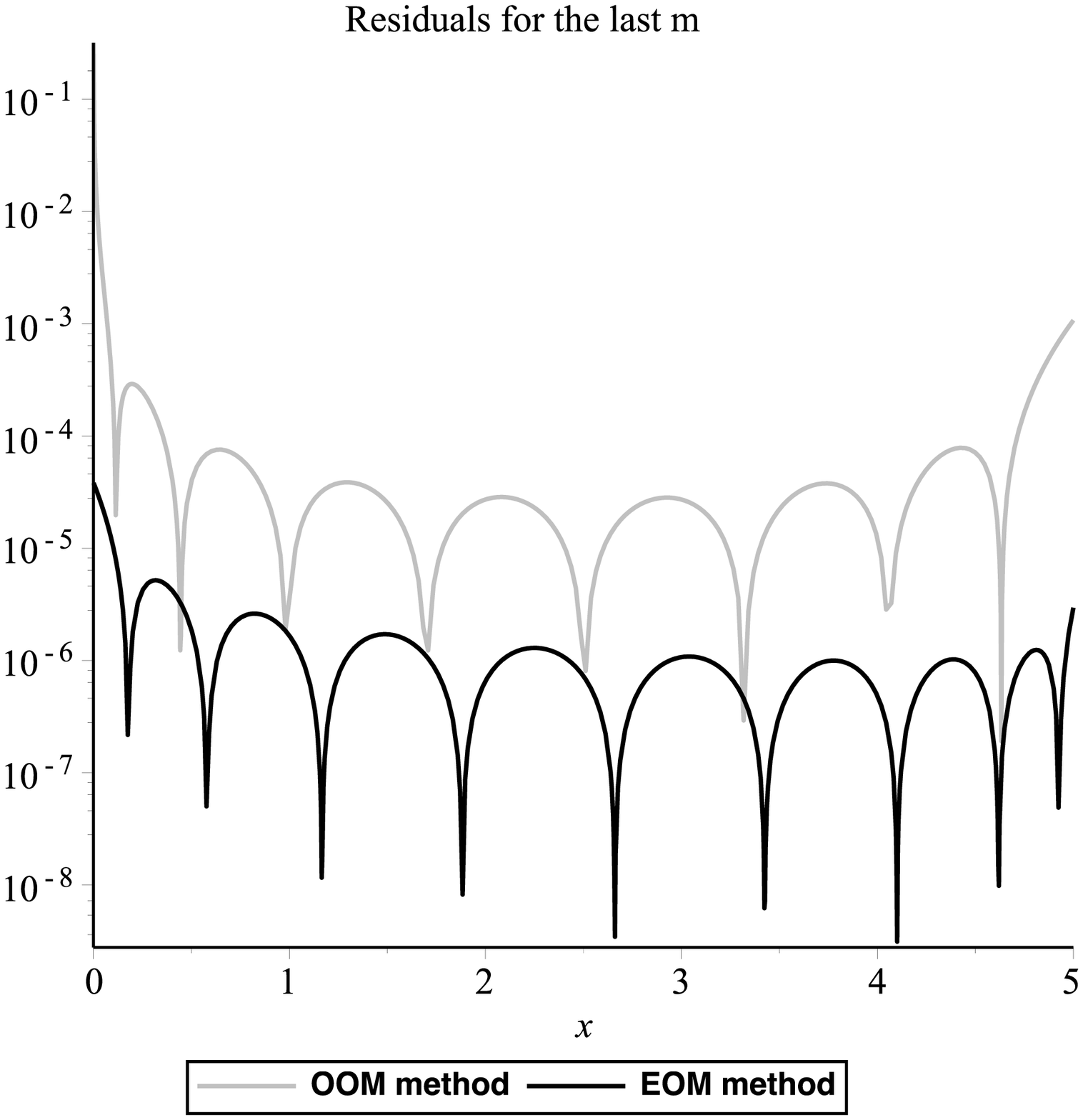}
\label{Fig_sXM1_8_8_last_res}
}
\caption{
the norm1 of the residual and error function plots for the largest $m$ value ($E$) of the figure (\ref{Fig_sXM1_8_8_s}) for $f(x)= 1,~g(x)= y(x),~N=8$}
\label{Fig_sXM1_8_8_last}
\end{figure}
\begin{figure}
\centering
\subfigure[$\fNorm{Error\left(y_m(x)\right)}_1$]{
\centering
\includegraphics[scale=0.4]{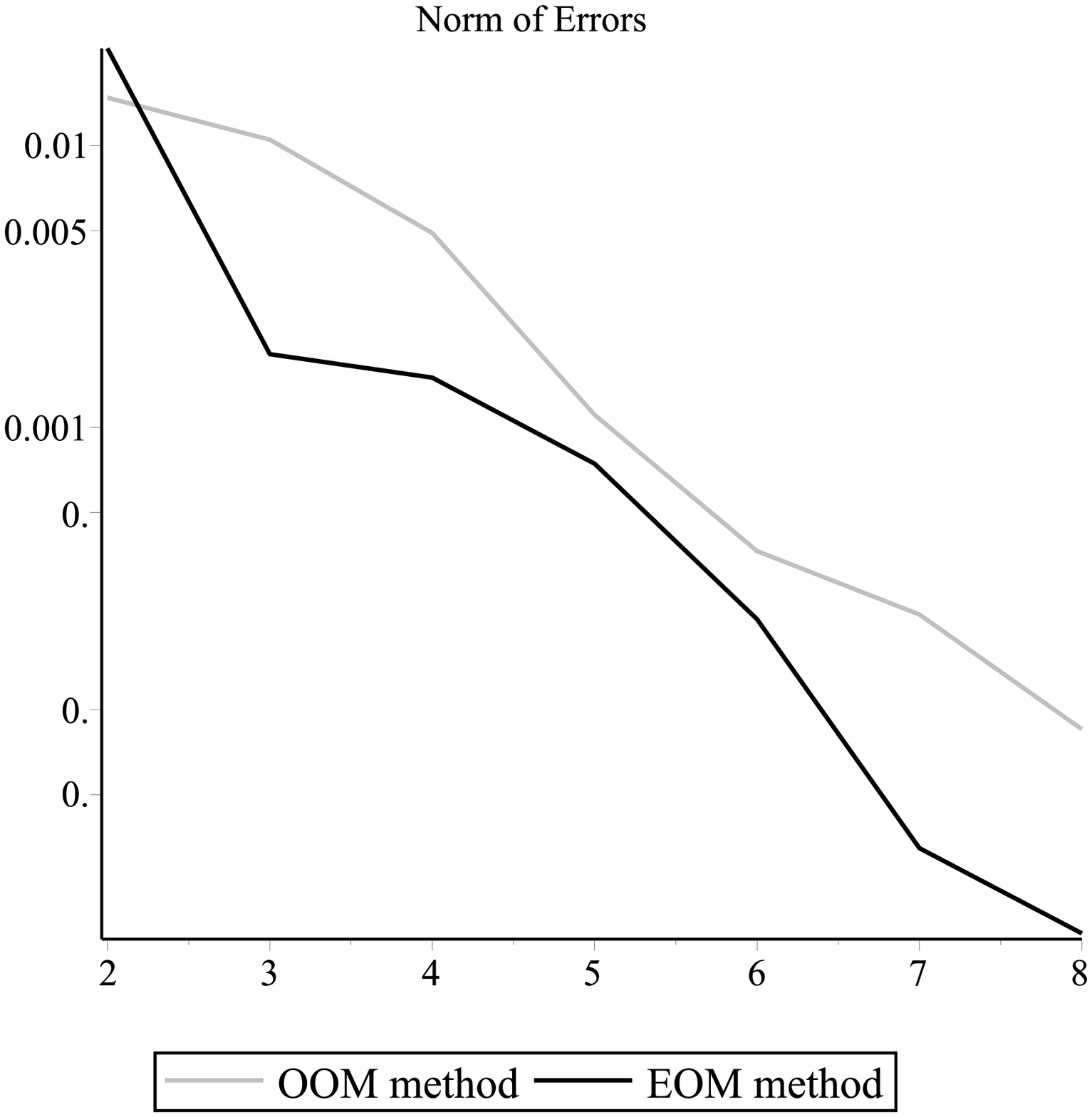}
\label{Fig_sXM5_8_8_errs}
}
\subfigure[$\fNorm{Residual\left(y_m(x)\right)}_1$]{
\centering
\includegraphics[scale=0.4]{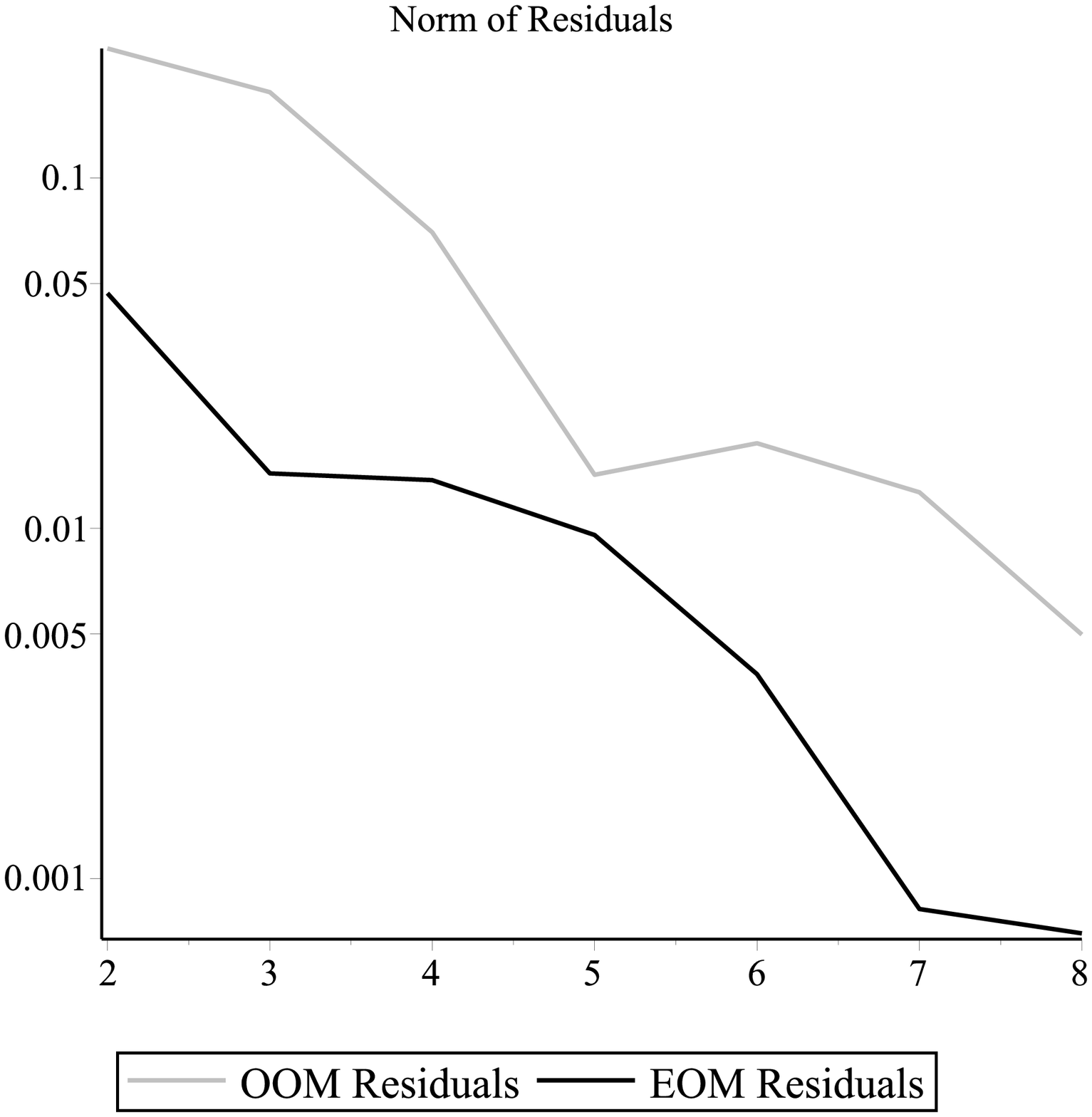}
\label{Fig_sXM5_8_8_ress}
}
\caption{
the norm1 of the residual and error function plots for several $m$ values and $f(x)= 1,~g(x)= y^5(x),~N=8$}
\label{Fig_sXM5_8_8_s}
\end{figure}
\begin{figure}
\centering
\subfigure[$\fNorm{Error\left(y_M(x)\right)}_1$]{
\centering
\includegraphics[scale=0.4]{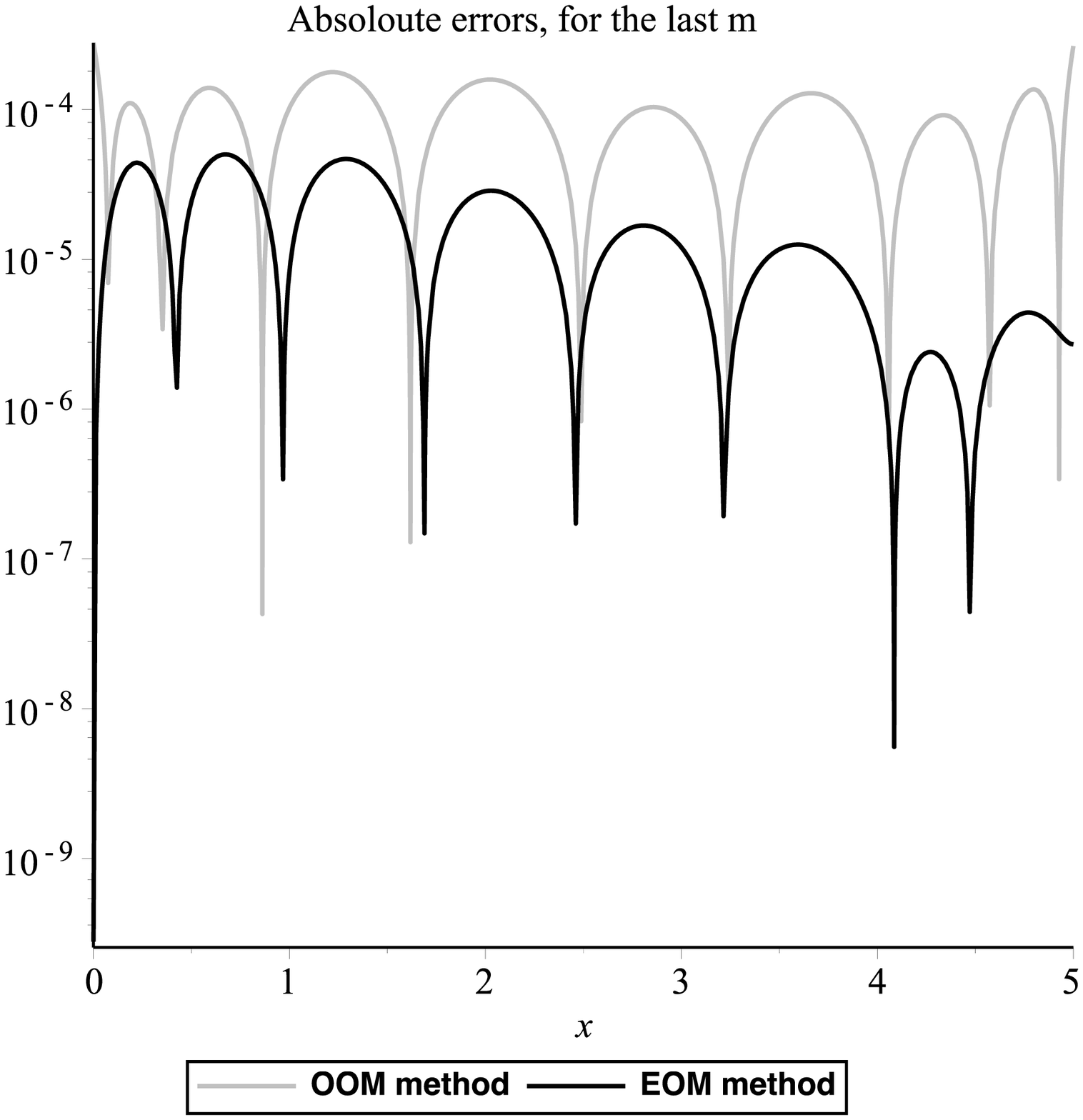}
\label{Fig_sXM5_8_8_last_err}
}
\subfigure[$\fNorm{Residual\left(y_M(x)\right)}_1$]{
\centering
\includegraphics[scale=0.4]{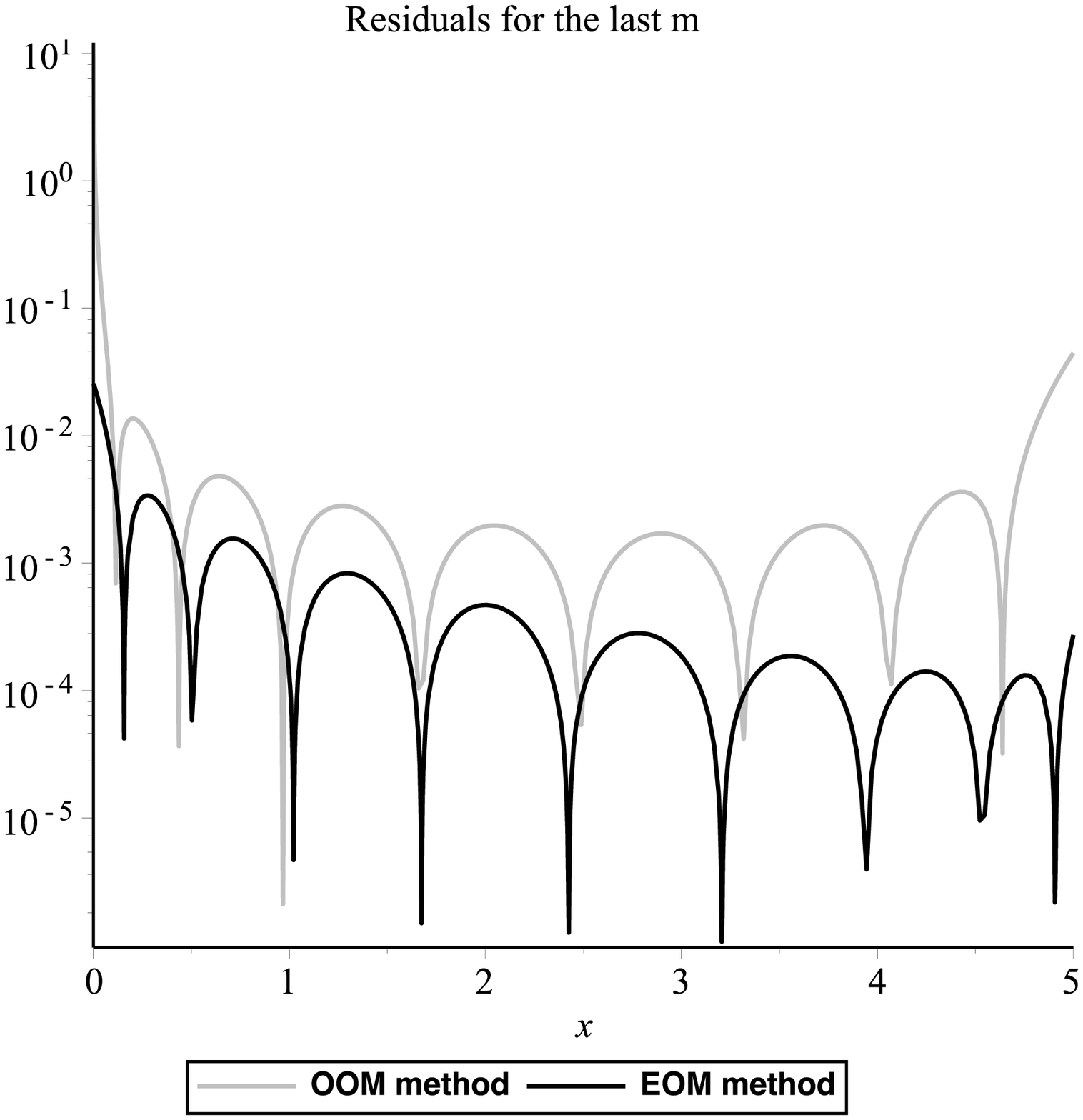}
\label{Fig_sXM5_8_8_last_res}
}
\caption{
the norm1 of the residual and error function plots for the largest $m$ value ($E$) of the figure (\ref{Fig_sXM5_8_8_s}) for $f(x)= 1,~g(x)= y^5(x),~N=8$}
\label{Fig_sXM5_8_8_last}
\end{figure}
\begin{figure}
\centering
\subfigure[$\fNorm{Error\left(y_m(x)\right)}_1$]{
\centering
\includegraphics[scale=0.4]{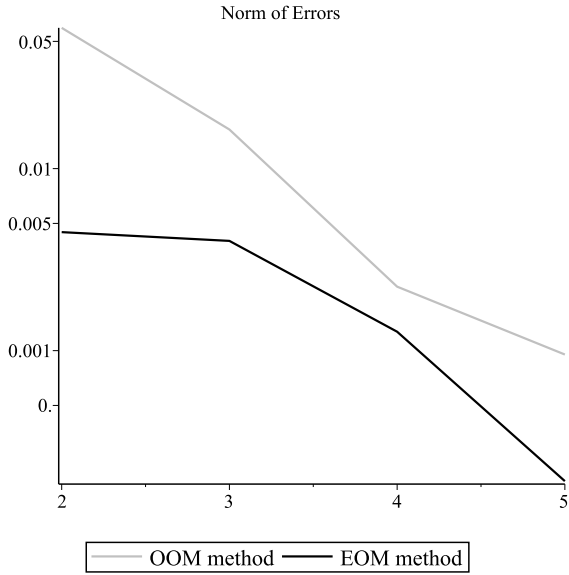}
\label{Fig_42exp_5_10_errs}
}
\subfigure[$\fNorm{Residual\left(y_m(x)\right)}_1$]{
\centering
\includegraphics[scale=0.4]{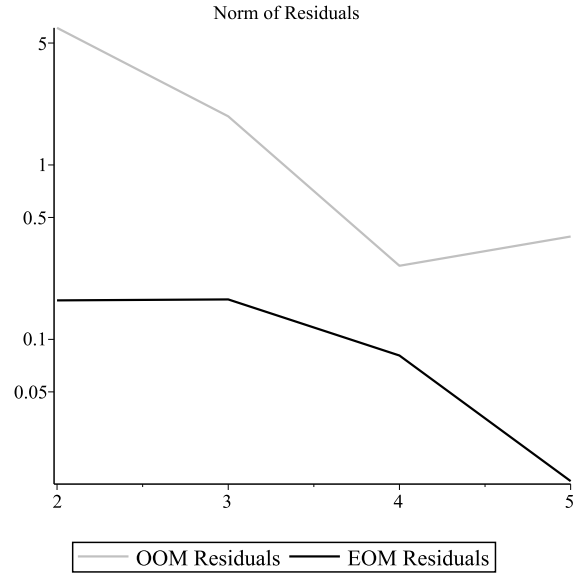}
\label{Fig_42exp_5_10_ress}
}
\caption{
the norm1 of the residual and error function plots for several $m$ values a $f(x)= 1,~g(x)= 4\left(2e^{y(x)}+ e^{\frac{y(x)}{2}}\right),~N=10$}
\label{Fig_42exp_5_10_s}
\end{figure}
\begin{figure}
\centering
\subfigure[$\fNorm{Error\left(y_M(x)\right)}_1$]{
\centering
\includegraphics[scale=0.4]{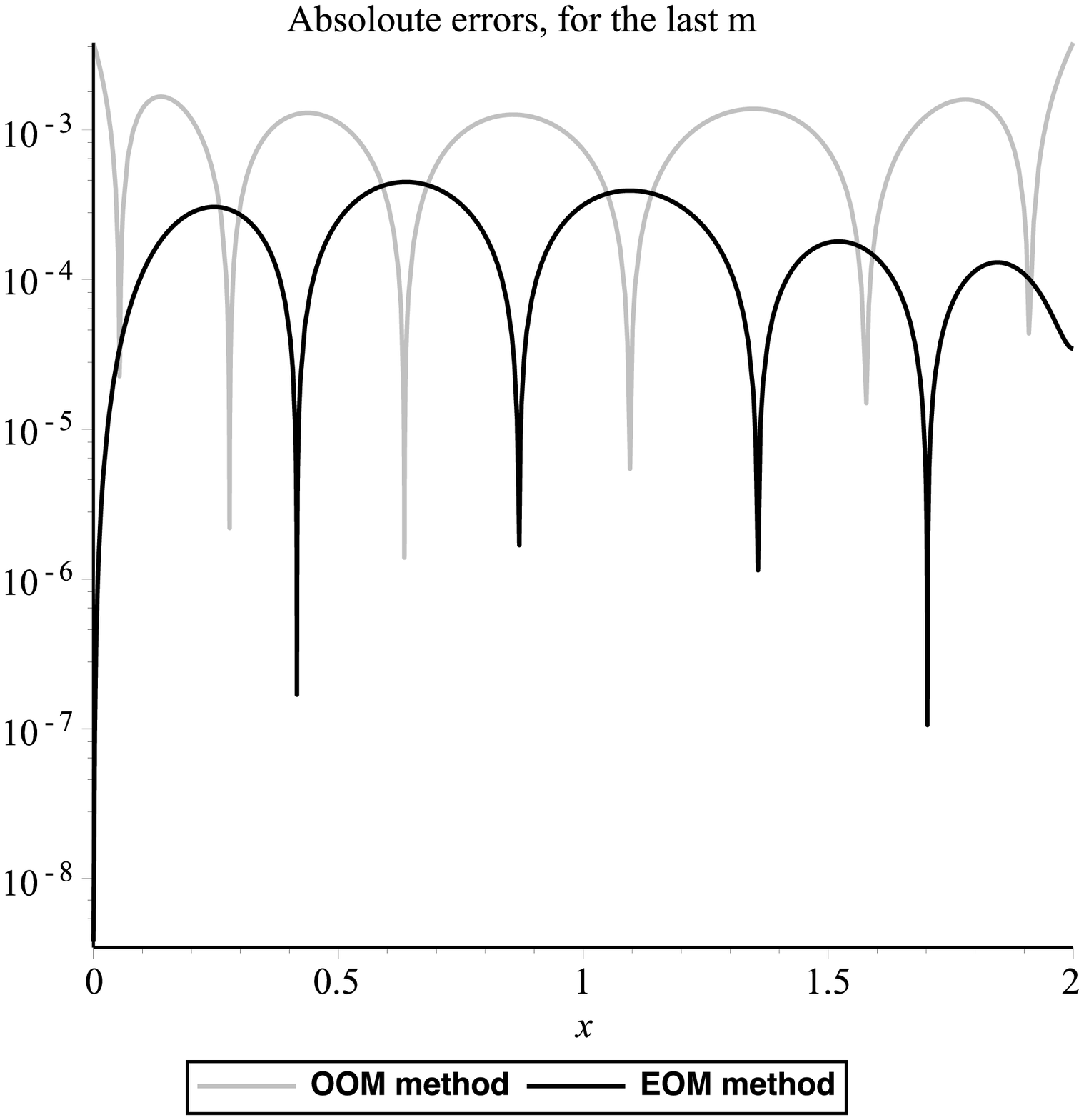}
\label{Fig_42exp_5_10_last_err}
}
\subfigure[$\fNorm{Residual\left(y_M(x)\right)}_1$]{
\centering
\includegraphics[scale=0.4]{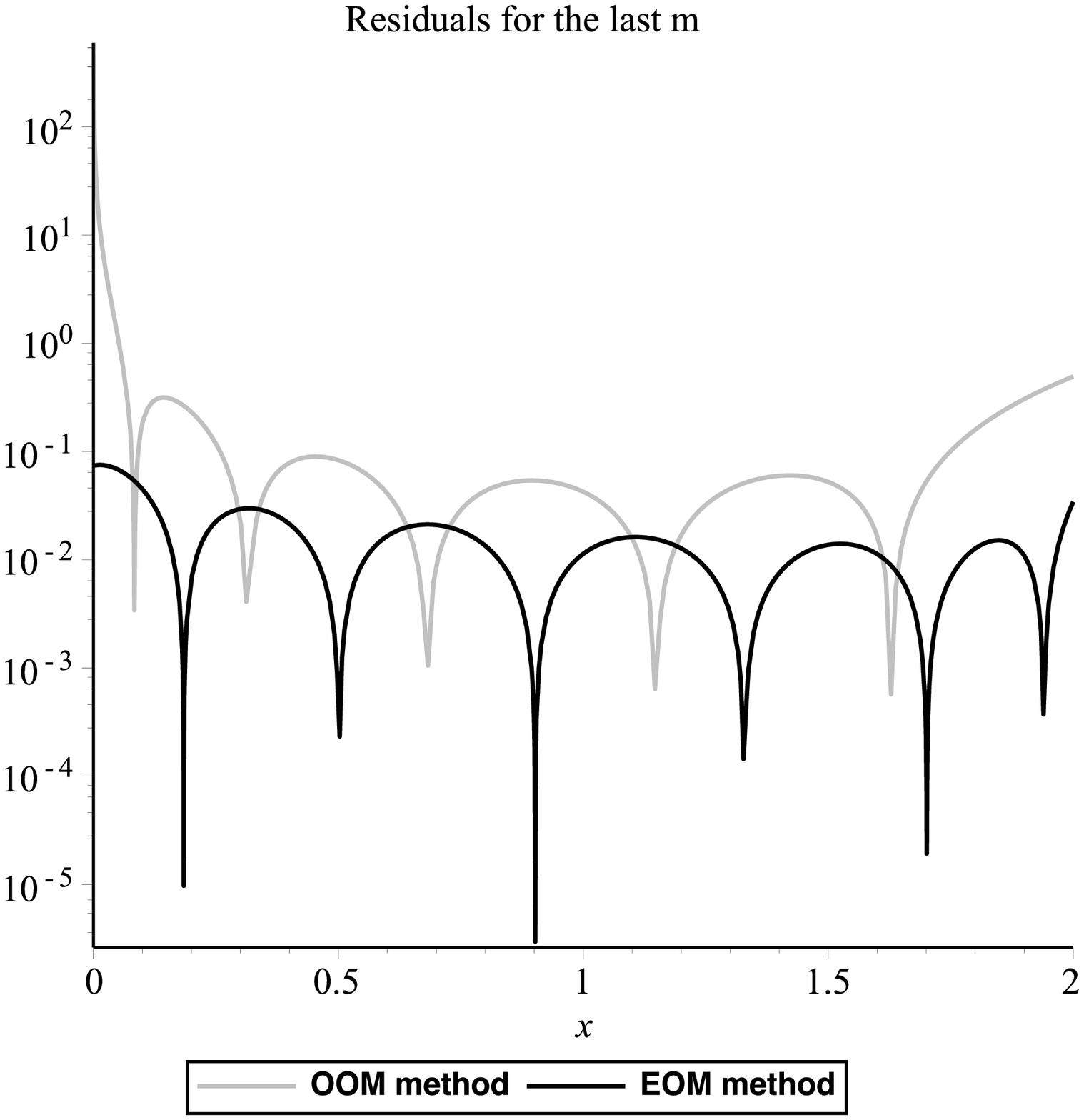}
\label{Fig_42exp_5_10_last_res}
}
\caption{
the norm1 of the residual and error function plots for the largest $m$ value ($E$) of the figure (\ref{Fig_42exp_5_10_s}) for $f(x)= 1,~g(x)= 4\left(2e^{y(x)}+ e^{\frac{y(x)}{2}},~N=10\right)$}
\label{Fig_42exp_5_10_last}
\end{figure}
\begin{figure}
\centering
\subfigure[$\fNorm{Error\left(y_m(x)\right)}_1$]{
\centering
\includegraphics[scale=0.4]{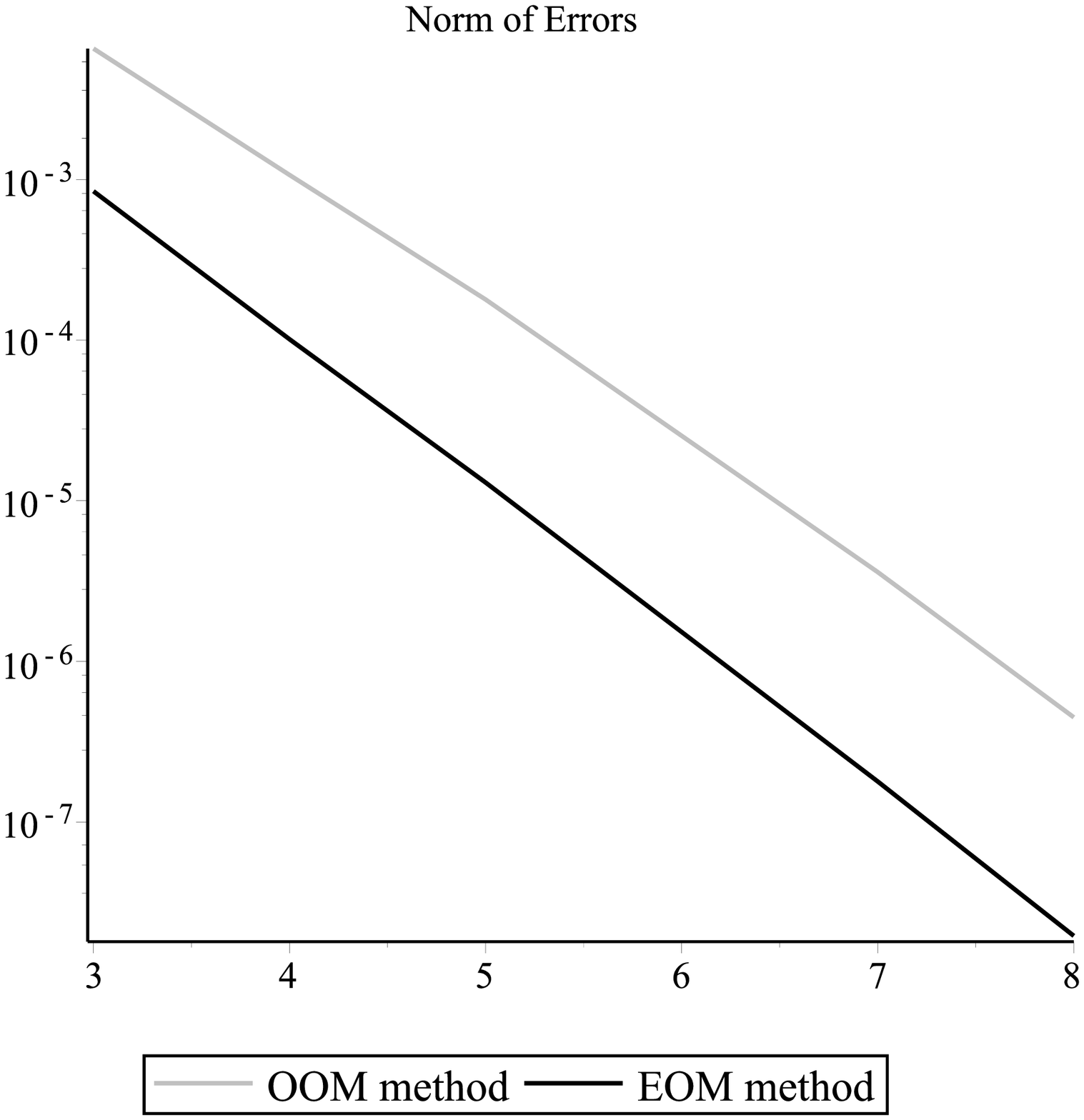}
\label{Fig_2LM_8_8_errs}
}
\subfigure[$\fNorm{Residual\left(y_m(x)\right)}_1$]{
\centering
\includegraphics[scale=0.4]{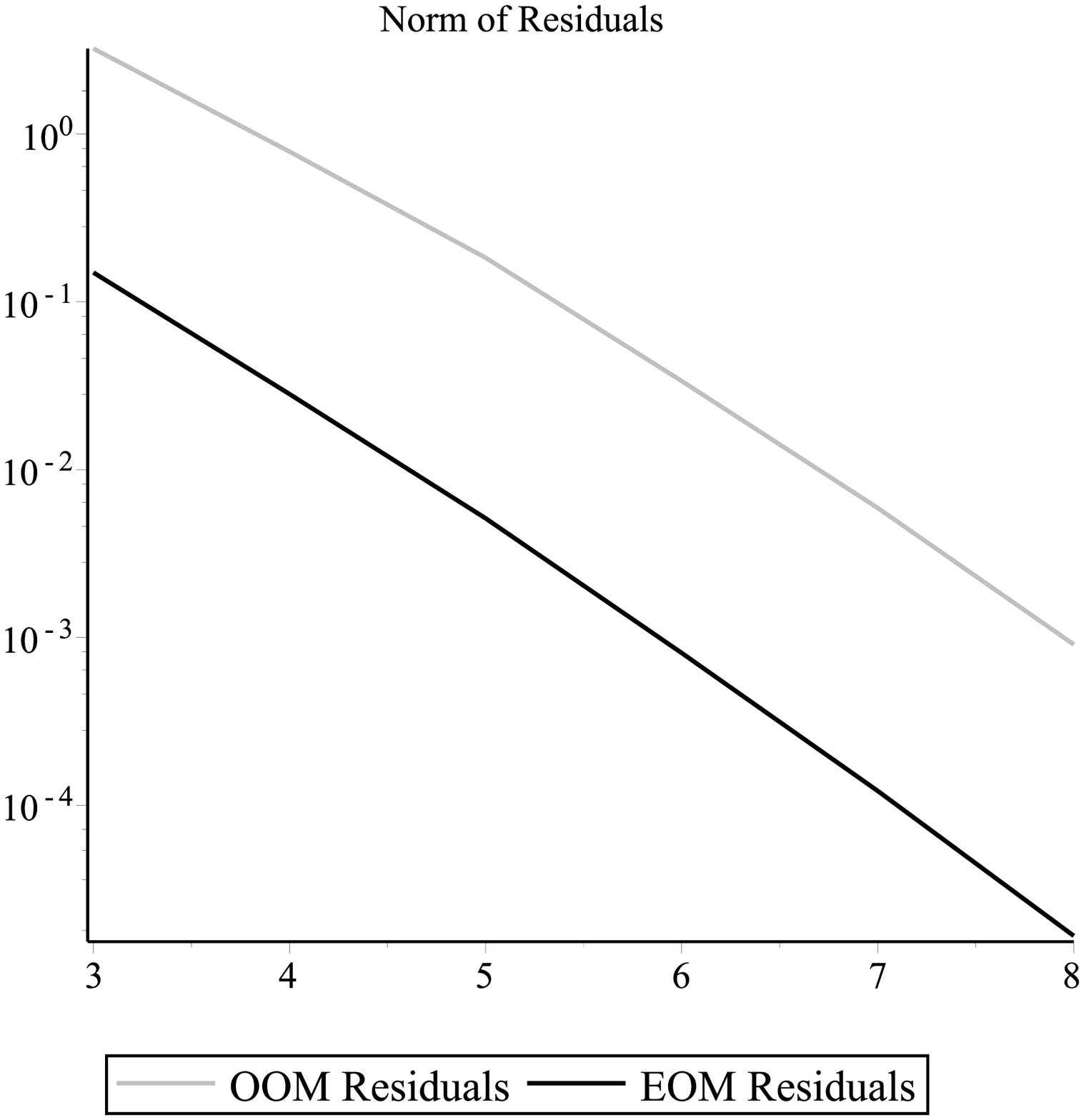}
\label{Fig_2LM_8_8_ress}
}
\caption{
the norm1 of the residual and error function plots for several $m$ values and $f(x)= -2\left(2x^2+ 3\right),~g(x)= y(x),~N=8$}
\label{Fig_2LM_8_8_s}
\end{figure}
\begin{figure}
\centering
\subfigure[$\fNorm{Error\left(y_M(x)\right)}_1$]{
\centering
\includegraphics[scale=0.4]{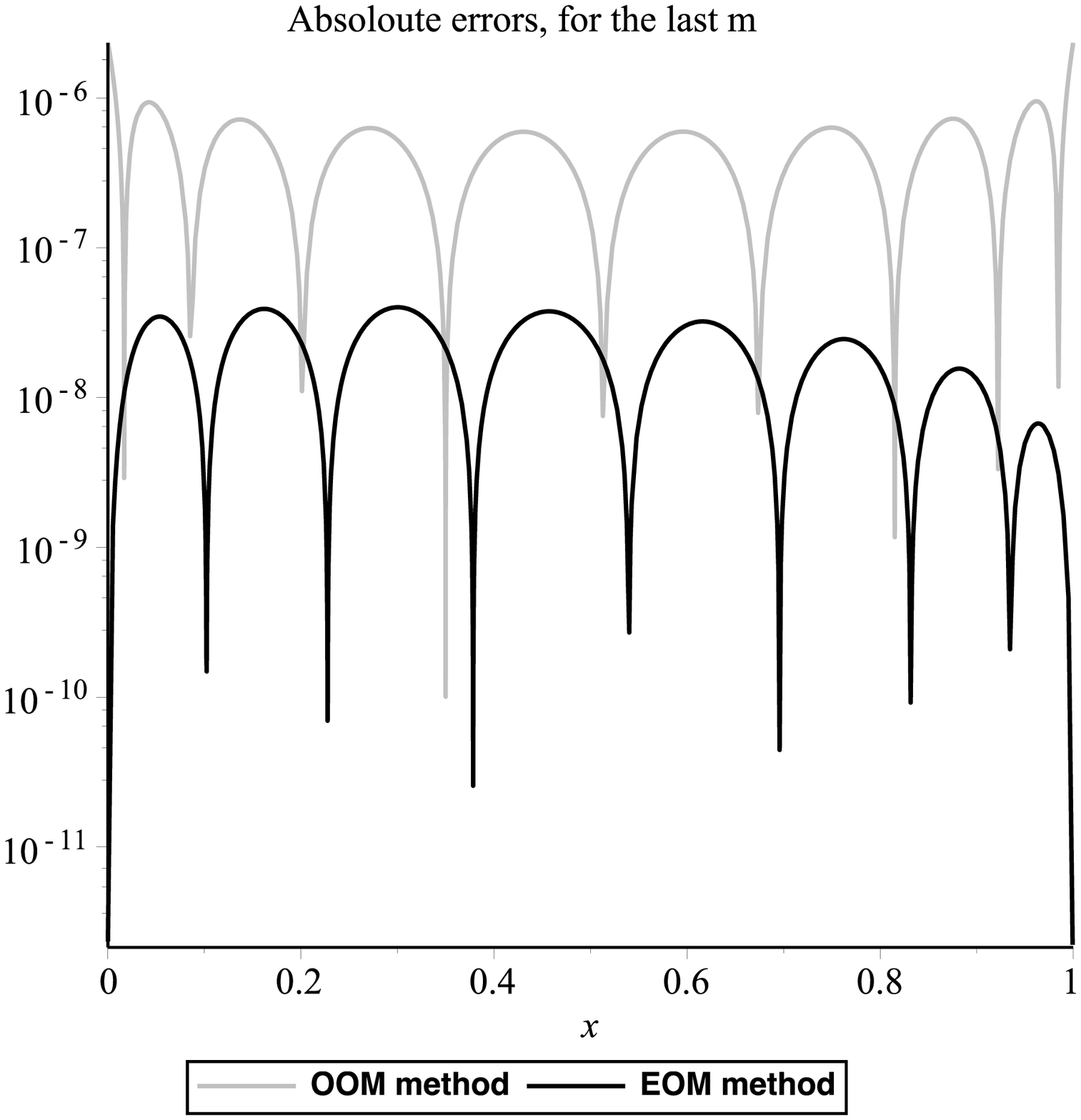}
\label{Fig_2LM_8_8_last_err}
}
\subfigure[$\fNorm{Residual\left(y_M(x)\right)}_1$]{
\centering
\includegraphics[scale=0.4]{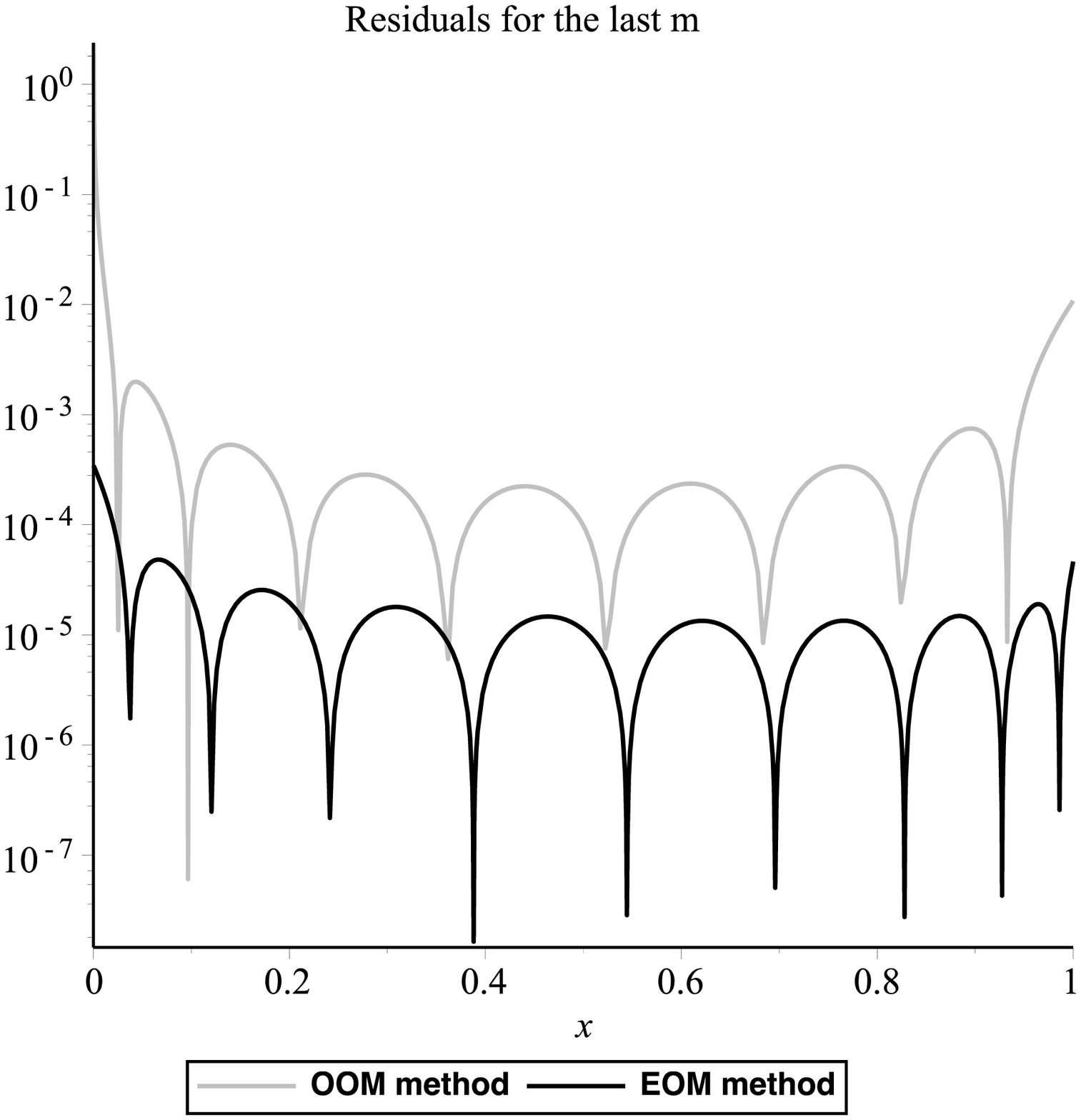}
\label{Fig_2LM_8_8_last_res}
}
\caption{
the norm1 of the residual and error function plots for the largest $m$ value ($E$) of the figure
(\ref{Fig_2LM_8_8_s}) for $f(x)= -2\left(2x^2+ 3\right),~g(x)= y(x),~N=8$}
\label{Fig_2LM_8_8_last}
\end{figure}
\begin{figure}
\centering
\subfigure[$\fNorm{Error\left(y_m(x)\right)}_1$]{
\centering
\includegraphics[scale=0.4]{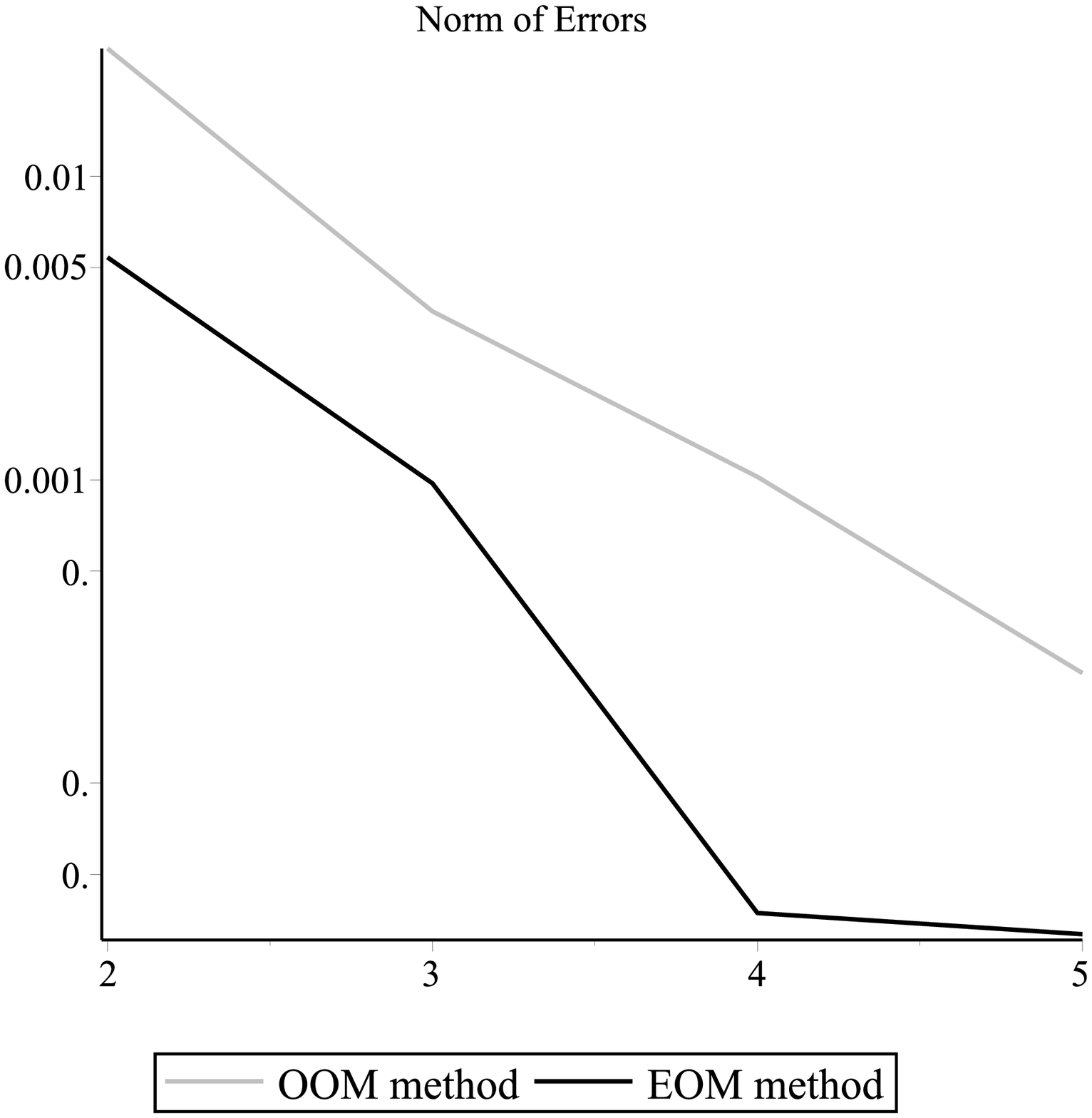}
\label{Fig_xM1A5_5_12_errs}
}
\subfigure[$\fNorm{Residual\left(y_m(x)\right)}_1$]{
\centering
\includegraphics[scale=0.4]{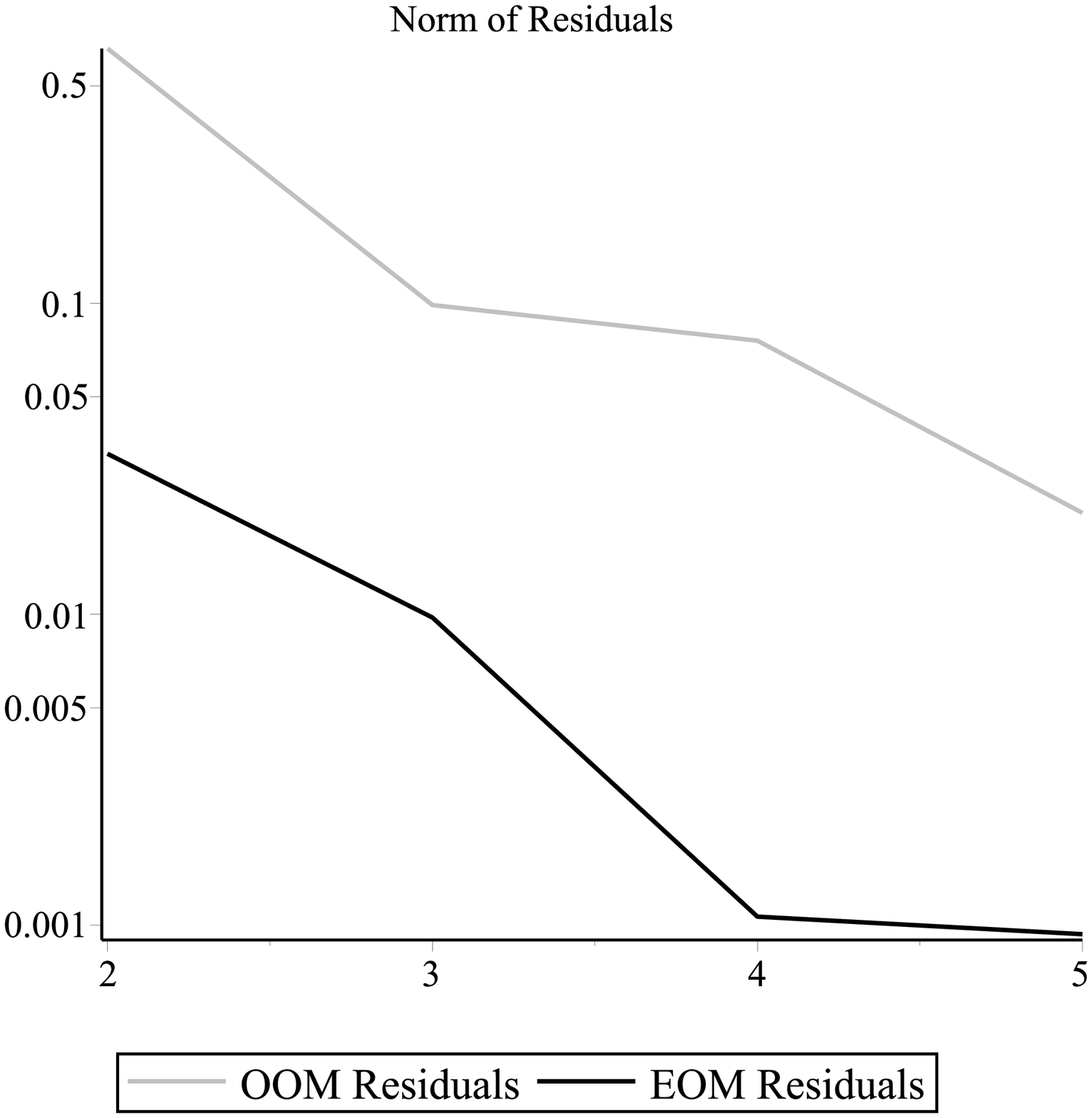}
\label{Fig_xM1A5_5_12_ress}
}
\caption{
the norm1 of the residual and error function plots for several $m$ values and $f(x)= 1,~g(x)= y^{\frac{3}{2}}(x),~N=12$}
\label{Fig_xM1A5_5_12_s}
\end{figure}
\begin{figure}
\centering
\subfigure[$\fNorm{Error\left(y_M(x)\right)}_1$]{
\centering
\includegraphics[scale=0.4]{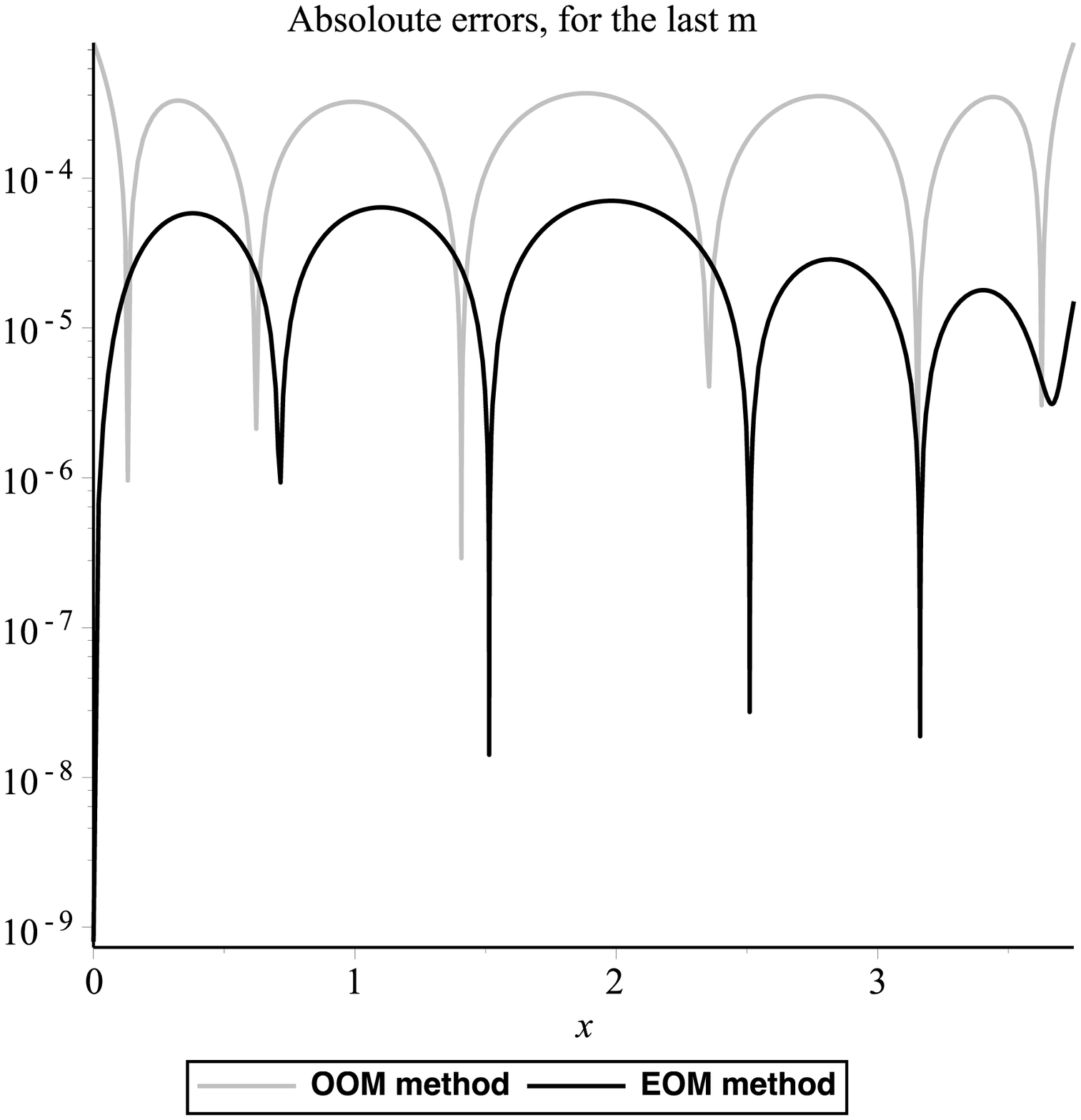}
\label{Fig_xM1A5_5_12_last_err}
}
\subfigure[$\fNorm{Residual\left(y_M(x)\right)}_1$]{
\centering
\includegraphics[scale=0.4]{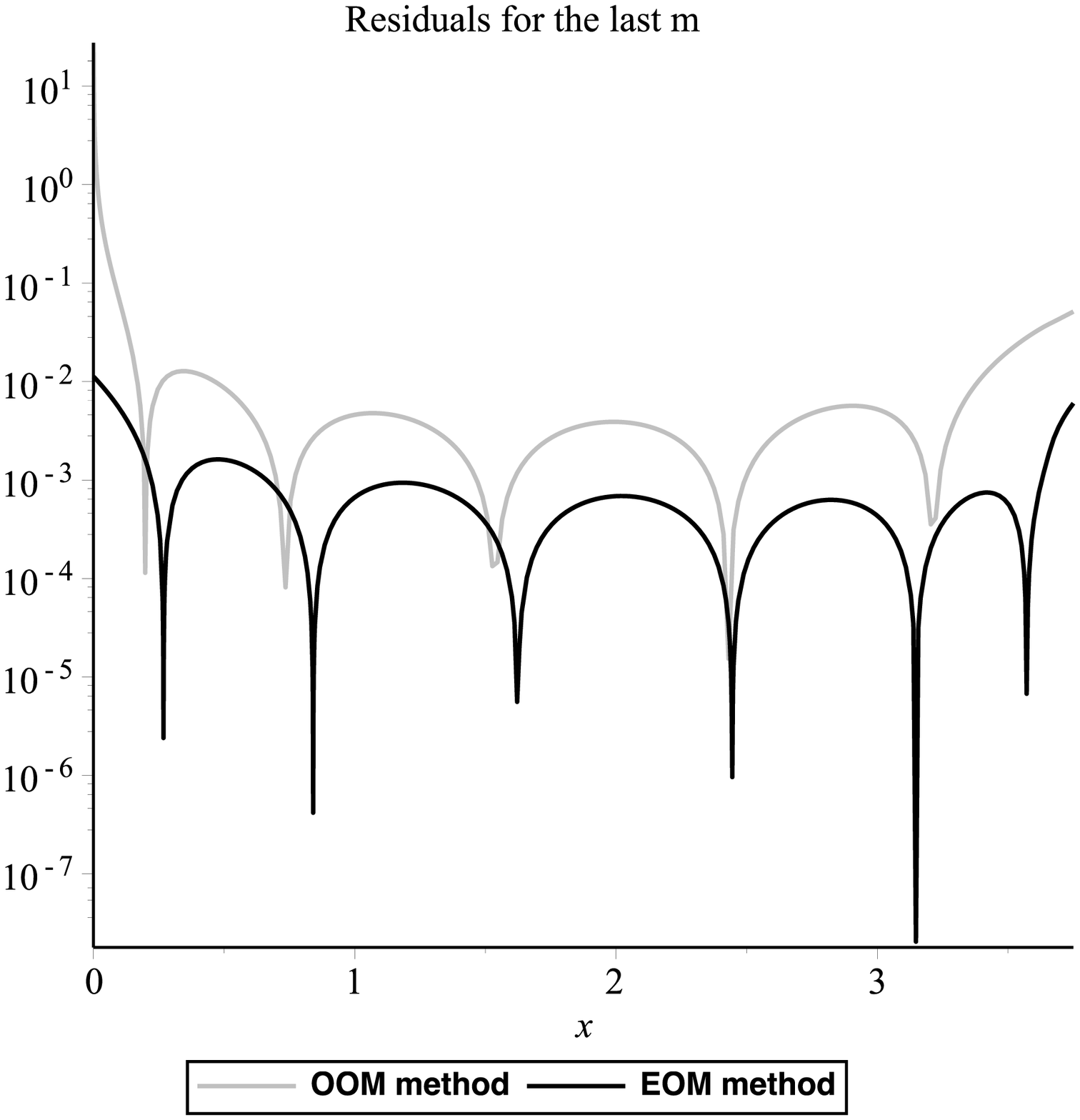}
\label{Fig_xM1A5_5_12_last_res}
}
\caption{
the norm1 of the residual and error function plots for the largest $m$ value ($E$) of the figure
(\ref{Fig_xM1A5_5_12_s}) for $f(x)= 1,~g(x)= y^{\frac{3}{2}}(x),~N=12$}
\label{Fig_xM1A5_5_12_last}
\end{figure}
\begin{figure}
\centering
\subfigure[$\fNorm{Error\left(y_m(x)\right)}_1$]{
\centering
\includegraphics[scale=0.4]{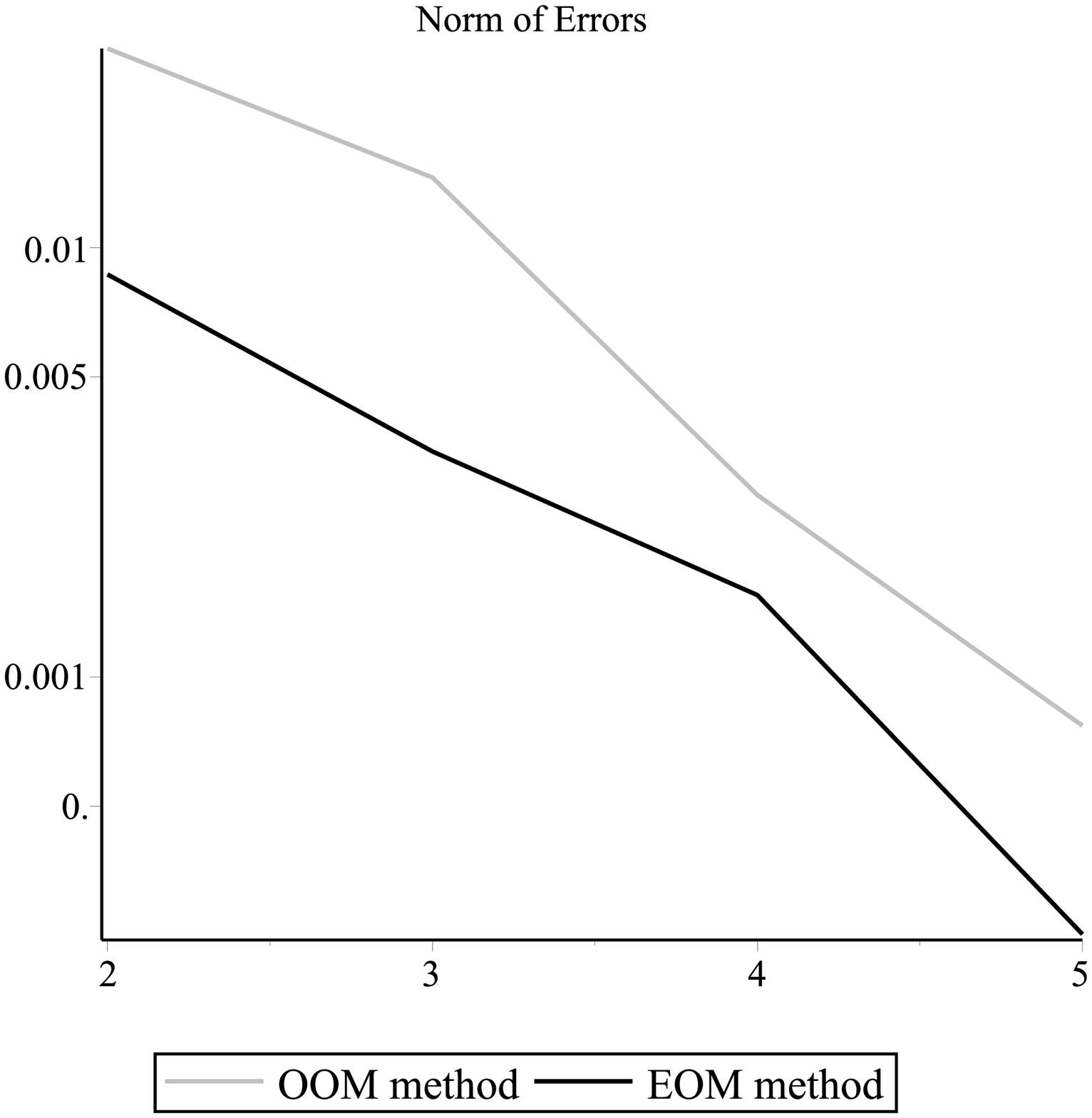}
\label{Fig_xM2A5_5_12_errs}
}
\subfigure[$\fNorm{Residual\left(y_m(x)\right)}_1$]{
\centering
\includegraphics[scale=0.4]{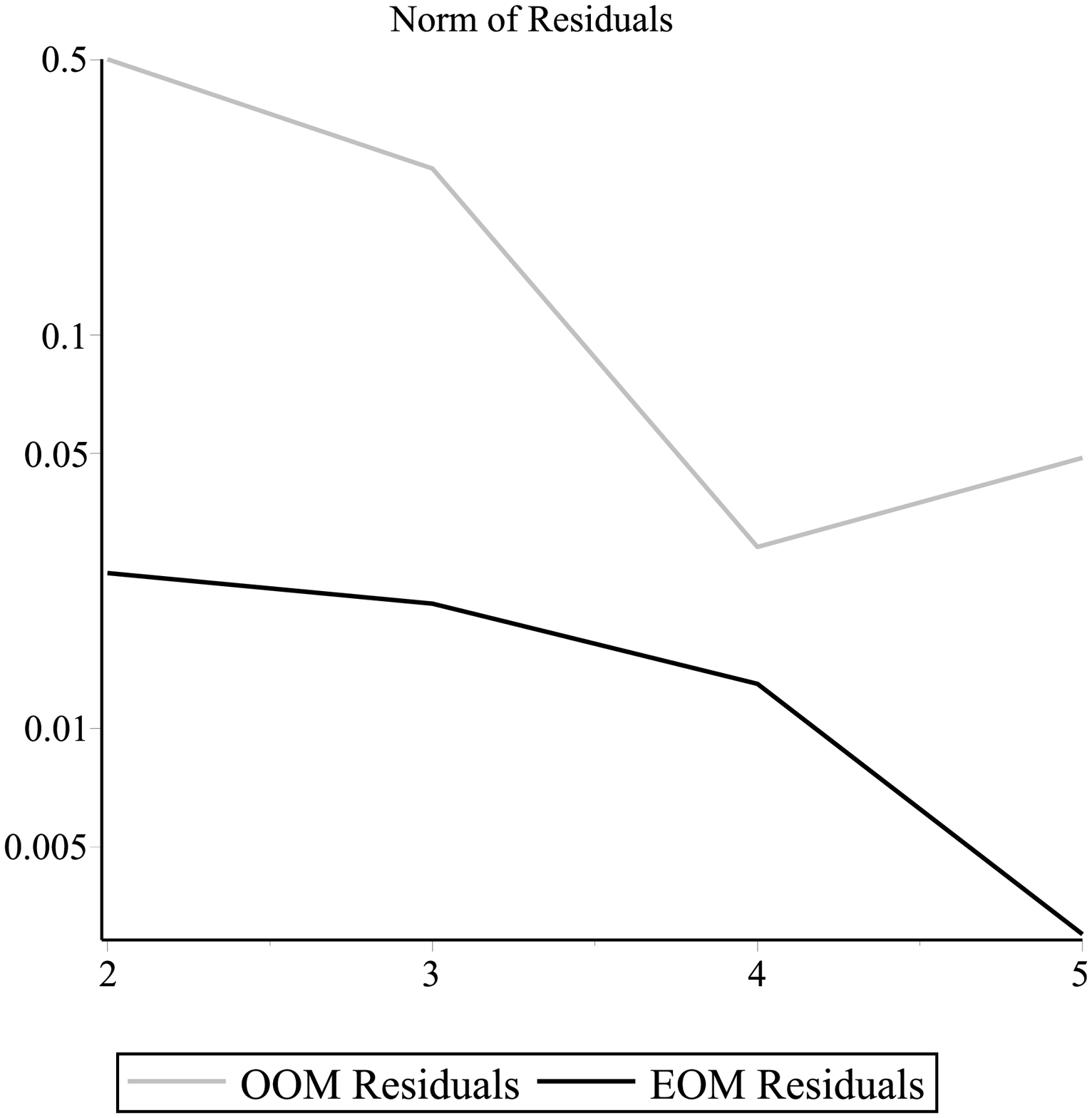}
\label{Fig_xM2A5_5_12_ress}
}
\caption{
the norm1 of the residual and error function plots for several $m$ values and $f(x)= 1,~g(x)= y^{\frac{5}{2}}(x),~N=12$}
\label{Fig_xM2A5_5_12_s}
\end{figure}
\begin{figure}
\centering
\subfigure[$\fNorm{Error\left(y_M(x)\right)}_1$]{
\centering
\includegraphics[scale=0.4]{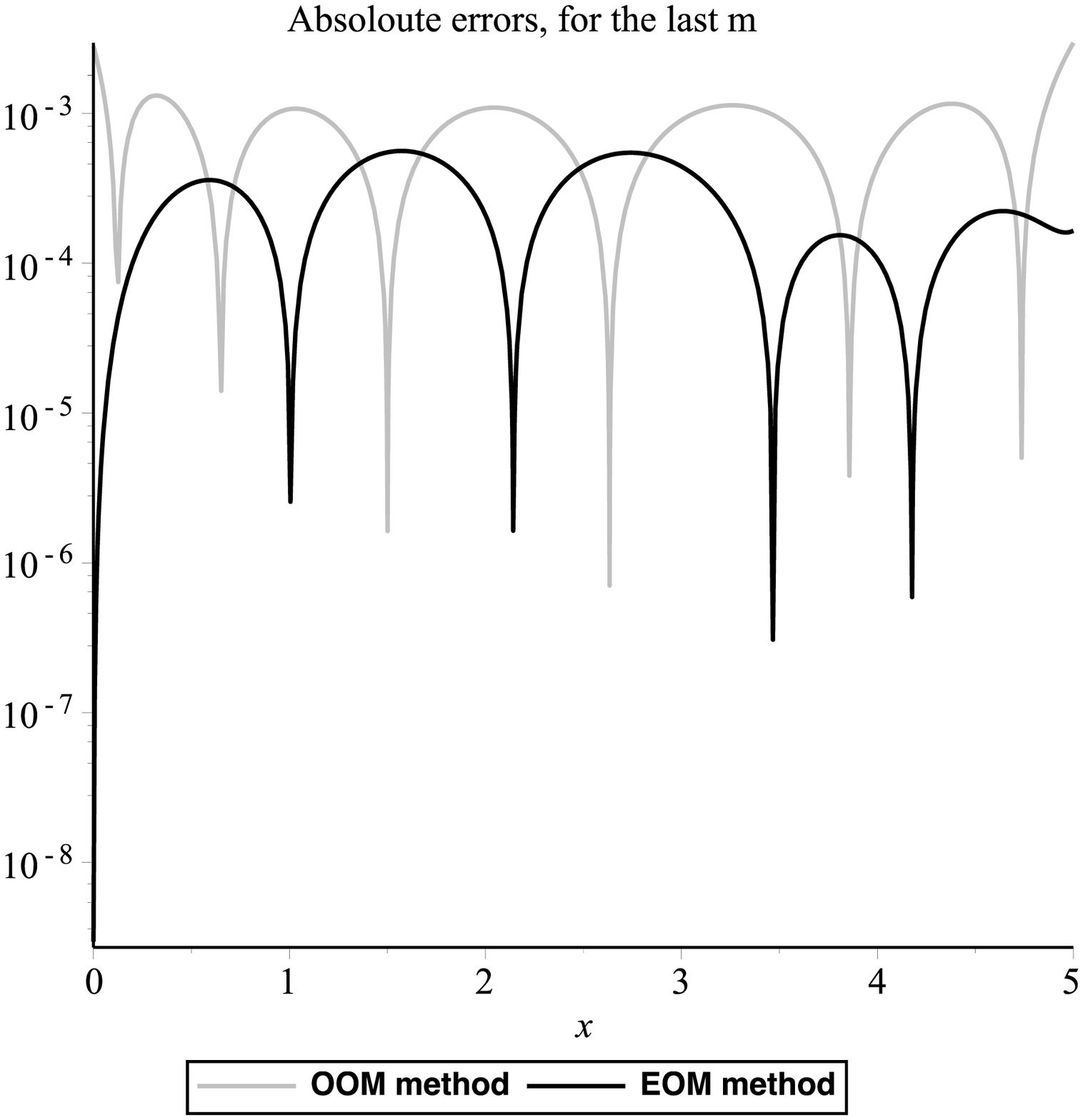}
\label{Fig_xM2A5_5_12_last_err}
}
\subfigure[$\fNorm{Residual\left(y_M(x)\right)}_1$]{
\centering
\includegraphics[scale=0.4]{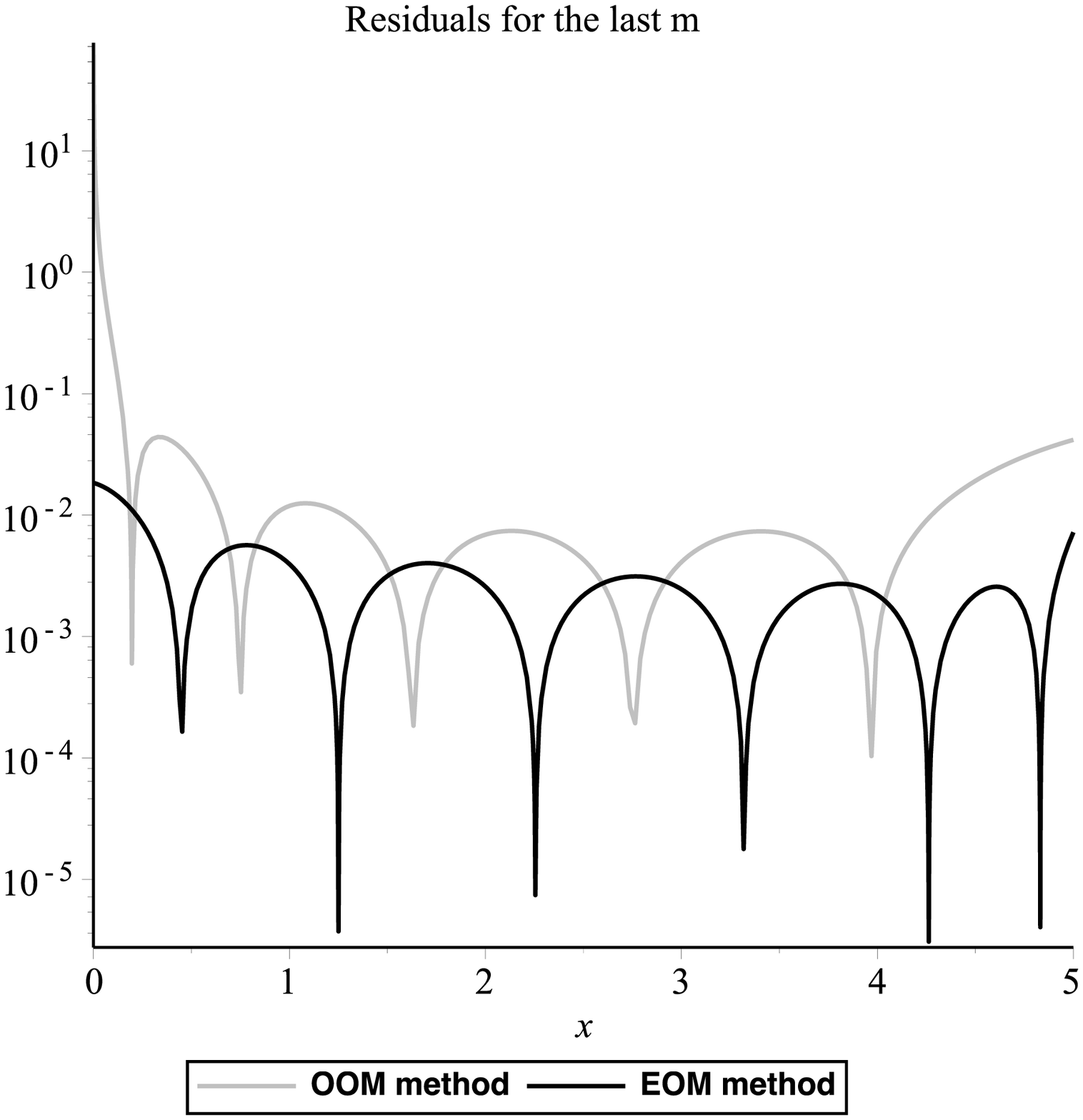}
\label{Fig_xM2A5_5_12_last_res}
}
\caption{
the norm1 of the residual and error function plots for the largest $m$ value ($E$) of the figure
(\ref{Fig_xM2A5_5_12_s}) for $f(x)= 1,~g(x)= y^{\frac{5}{2}}(x),~N=12$}
\label{Fig_xM2A5_5_12_last}
\end{figure}
\begin{figure}
\centering
\subfigure[$\fNorm{Error\left(y_M(x)\right)}_1$]{
\centering
\includegraphics[scale=0.4]{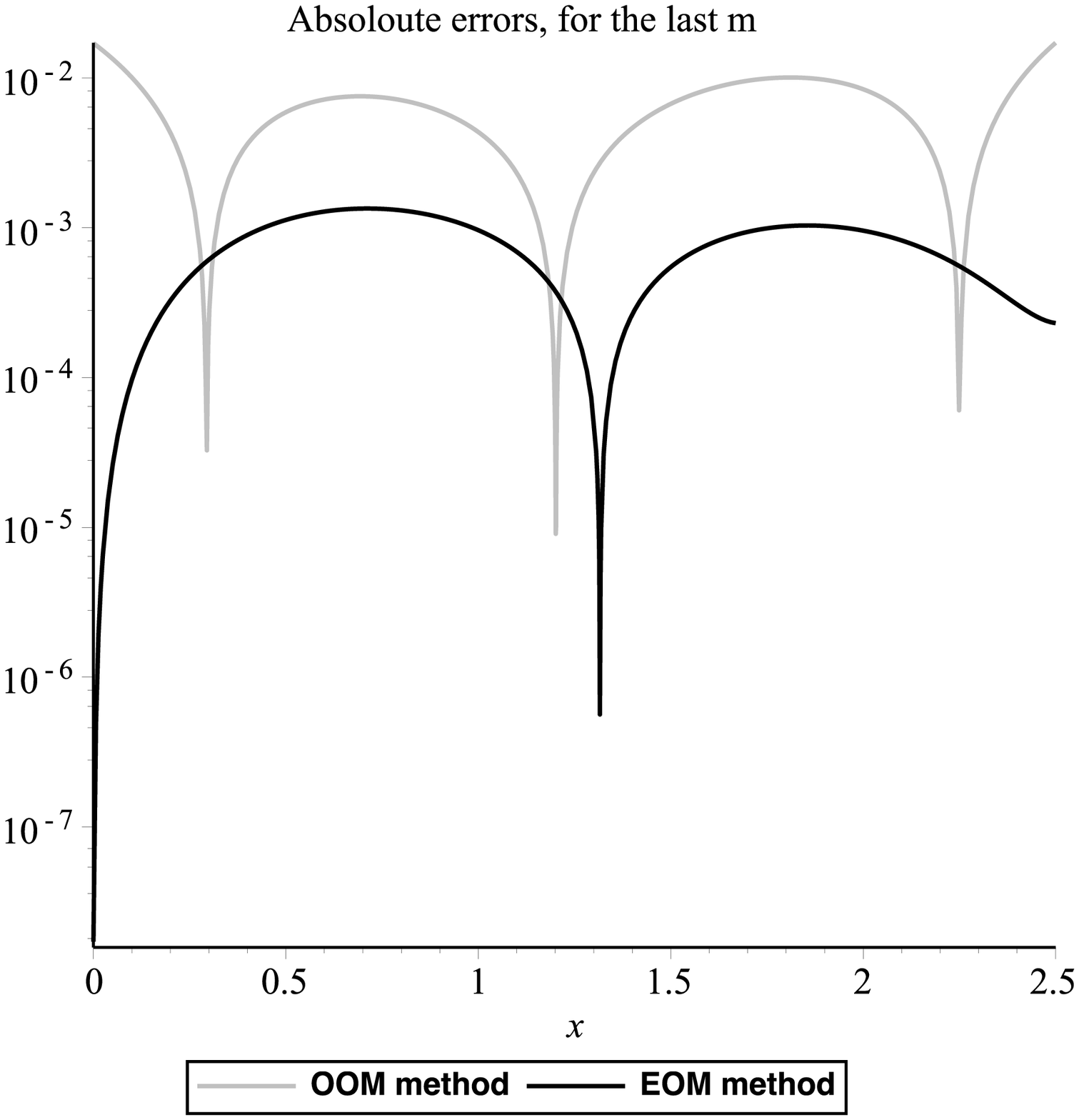}
\label{Fig_exp_res_2_15_last_err}
}
\subfigure[$\fNorm{Residual\left(y_M(x)\right)}_1$]{
\centering
\includegraphics[scale=0.4]{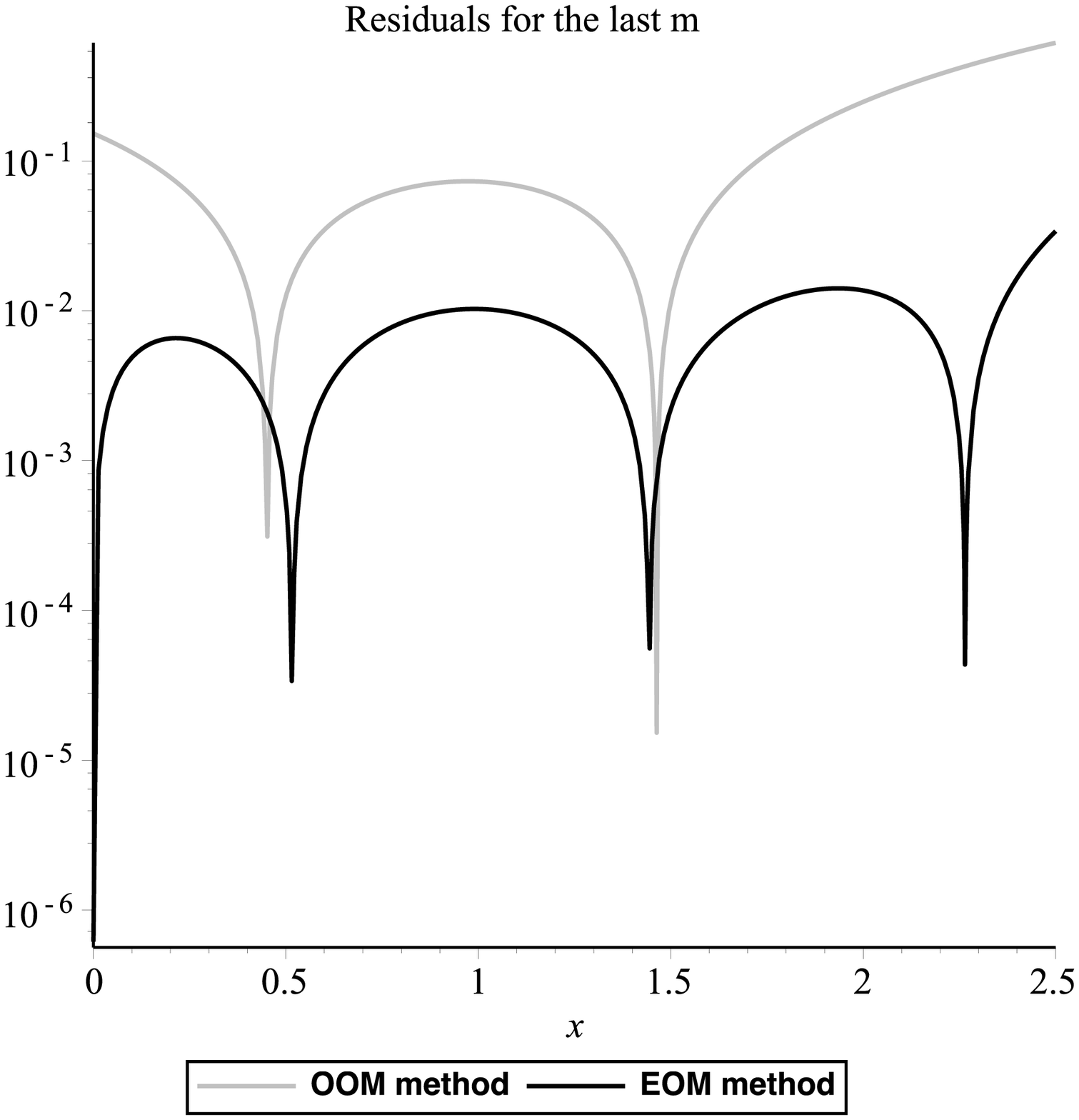}
\label{Fig_exp_res_2_15_last_res}
}
\caption{
the norm1 of the residual and error function for $M=2$ and $f(x)= 1,~g(x)= e^{y(x)},~N=15$}
\label{Fig_exp_res_2_15_last}
\end{figure}
\begin{figure}
\centering
\subfigure[$\fNorm{Error\left(y_m(x)\right)}_1$]{
\centering
\includegraphics[scale=0.4]{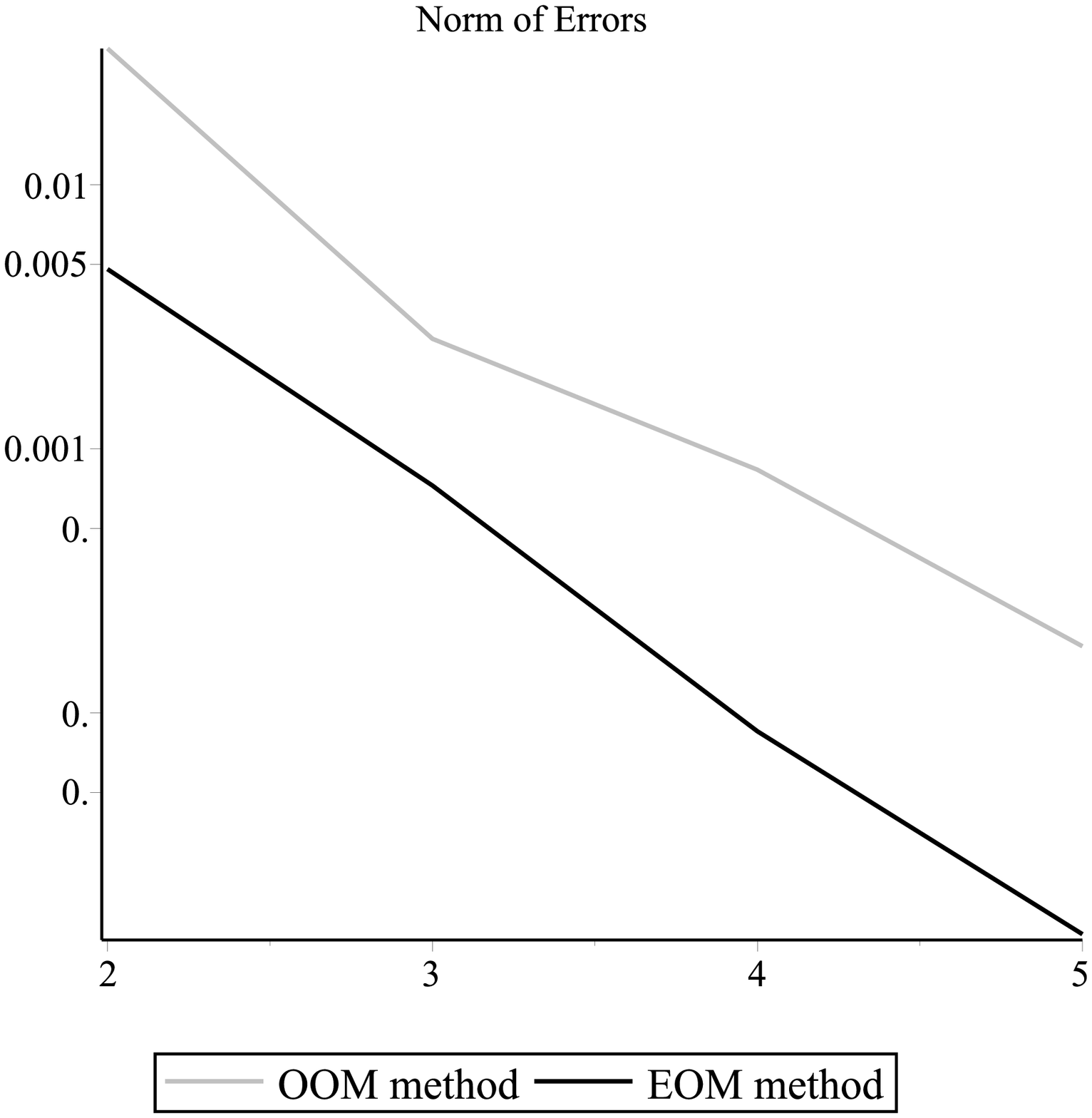}
\label{Fig_sinh_5_10_errs}
}
\subfigure[$\fNorm{Residual\left(y_m(x)\right)}_1$]{
\centering
\includegraphics[scale=0.4]{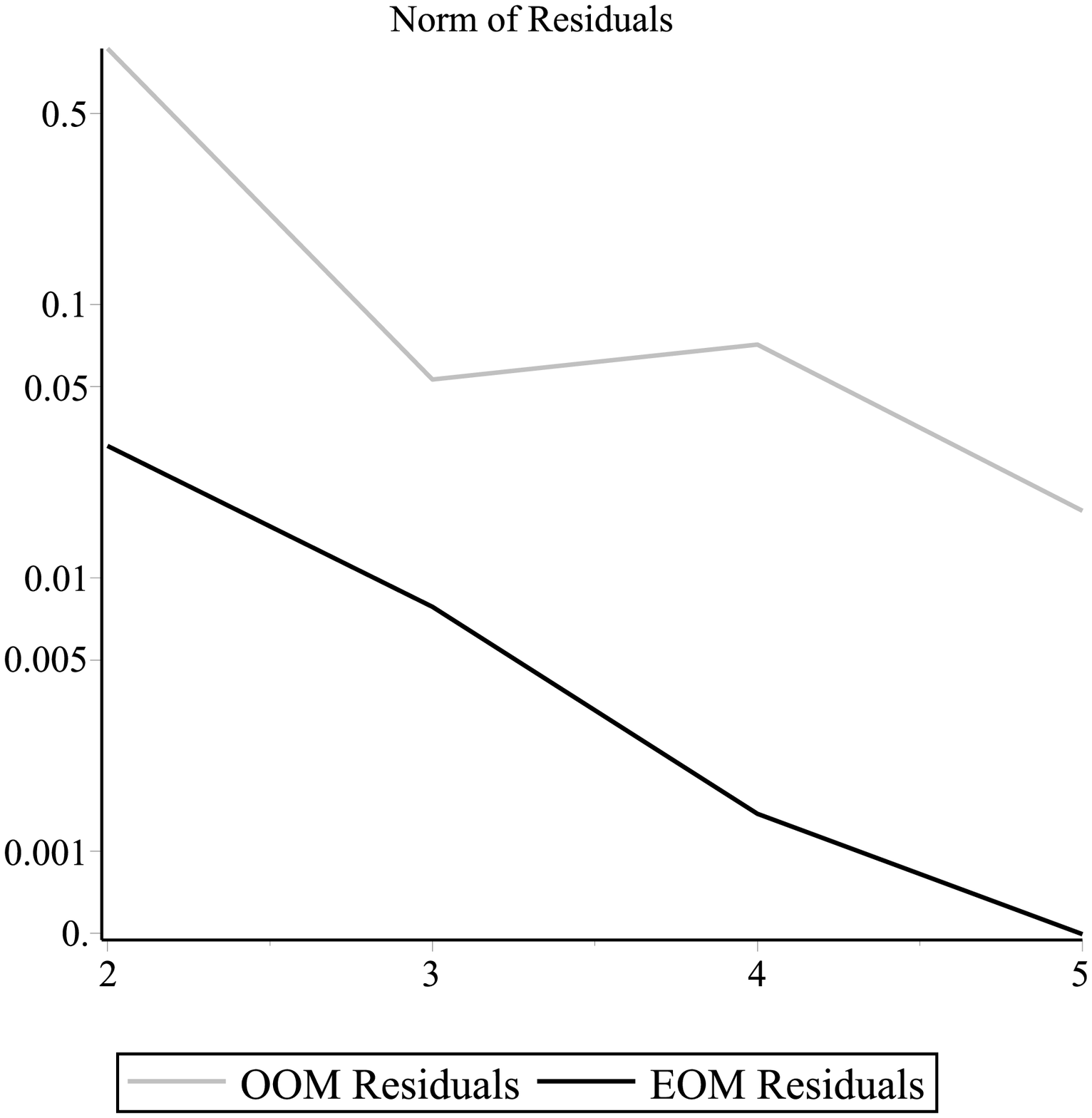}
\label{Fig_sinh_5_10_ress}
}
\caption{
the norm1 of the residual and error function plots for several $m$ values and $f(x)= 1,~g(x)= \sinh(y(x)),~N=10$}
\label{Fig_sinh_5_10_s}
\end{figure}
\begin{figure}
\centering
\subfigure[$\fNorm{Error\left(y_M(x)\right)}_1$]{
\centering
\includegraphics[scale=0.4]{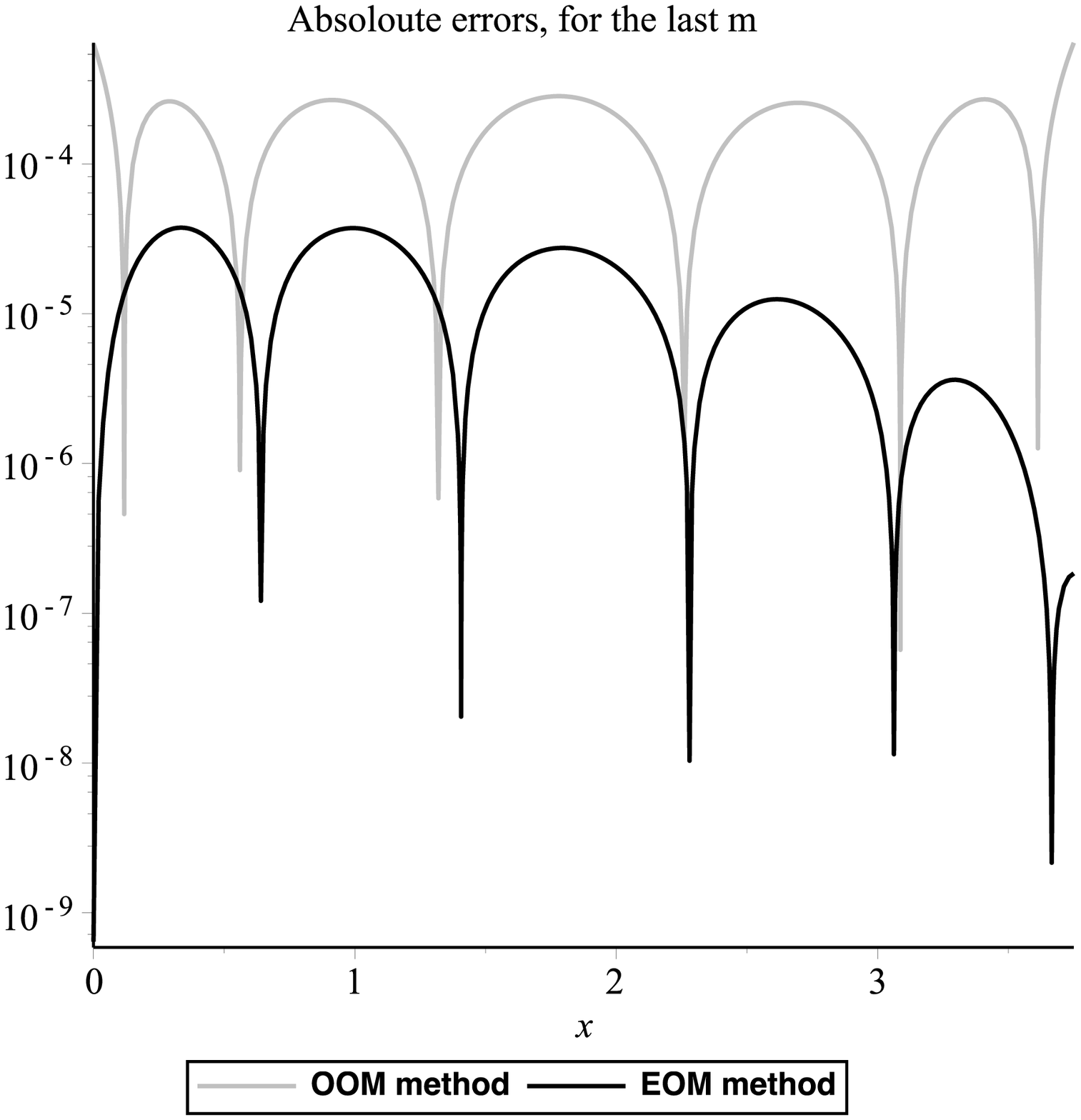}
\label{Fig_sinh_5_10_last_err}
}
\subfigure[$\fNorm{Residual\left(y_M(x)\right)}_1$]{
\centering
\includegraphics[scale=0.4]{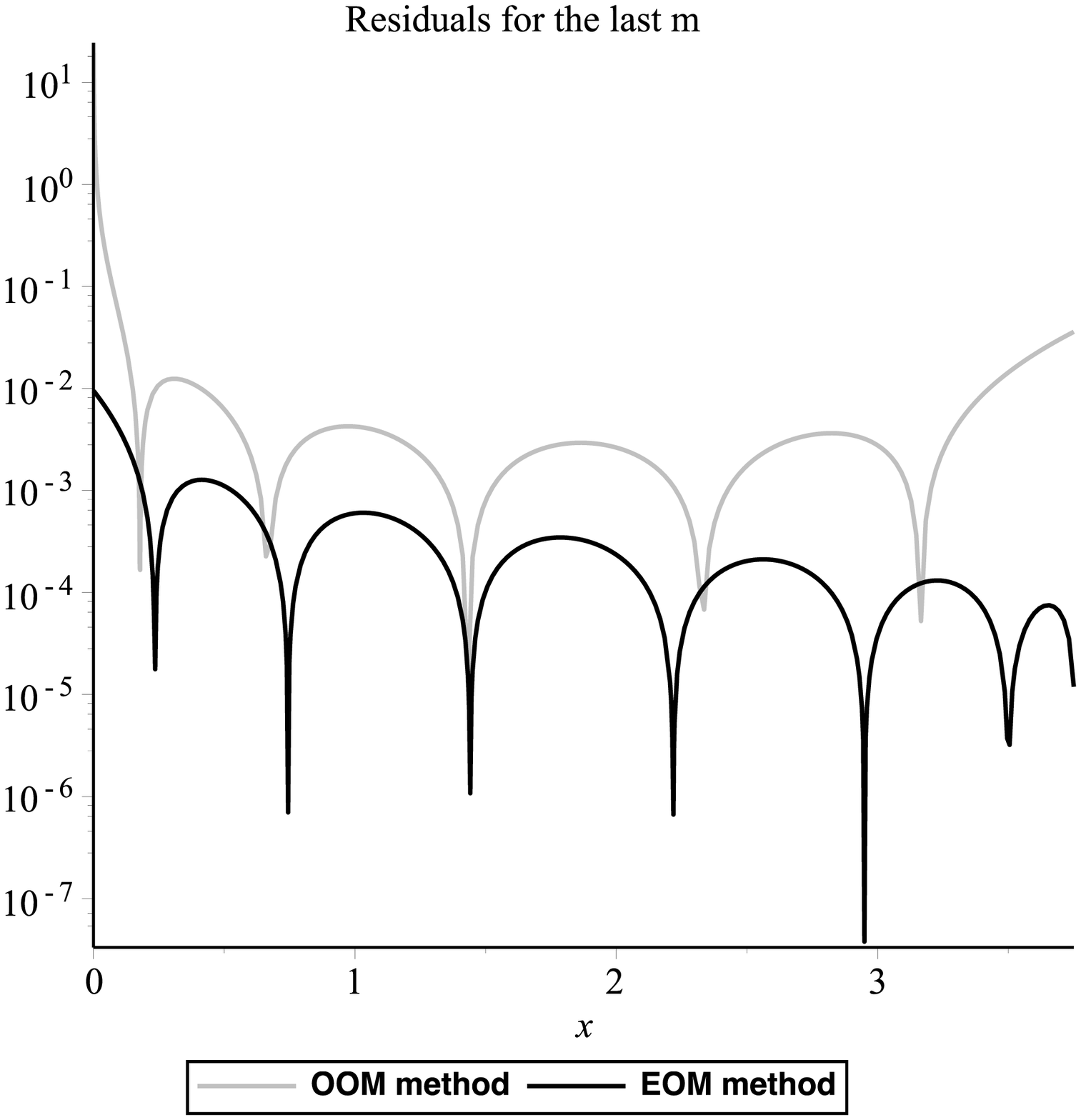}
\label{Fig_sinh_5_10_last_res}
}
\caption{
the norm1 of the residual and error function plots for the largest $m$ value ($E$) of the figure
(\ref{Fig_sinh_5_10_s})
for
$f(x)= 1,~g(x)= \sinh(y(x)),~N=10$}
\label{Fig_sinh_5_10_last}
\end{figure}
\begin{figure}
\centering
\subfigure[$\fNorm{Error\left(y_m(x)\right)}_1$]{
\centering
\includegraphics[scale=0.4]{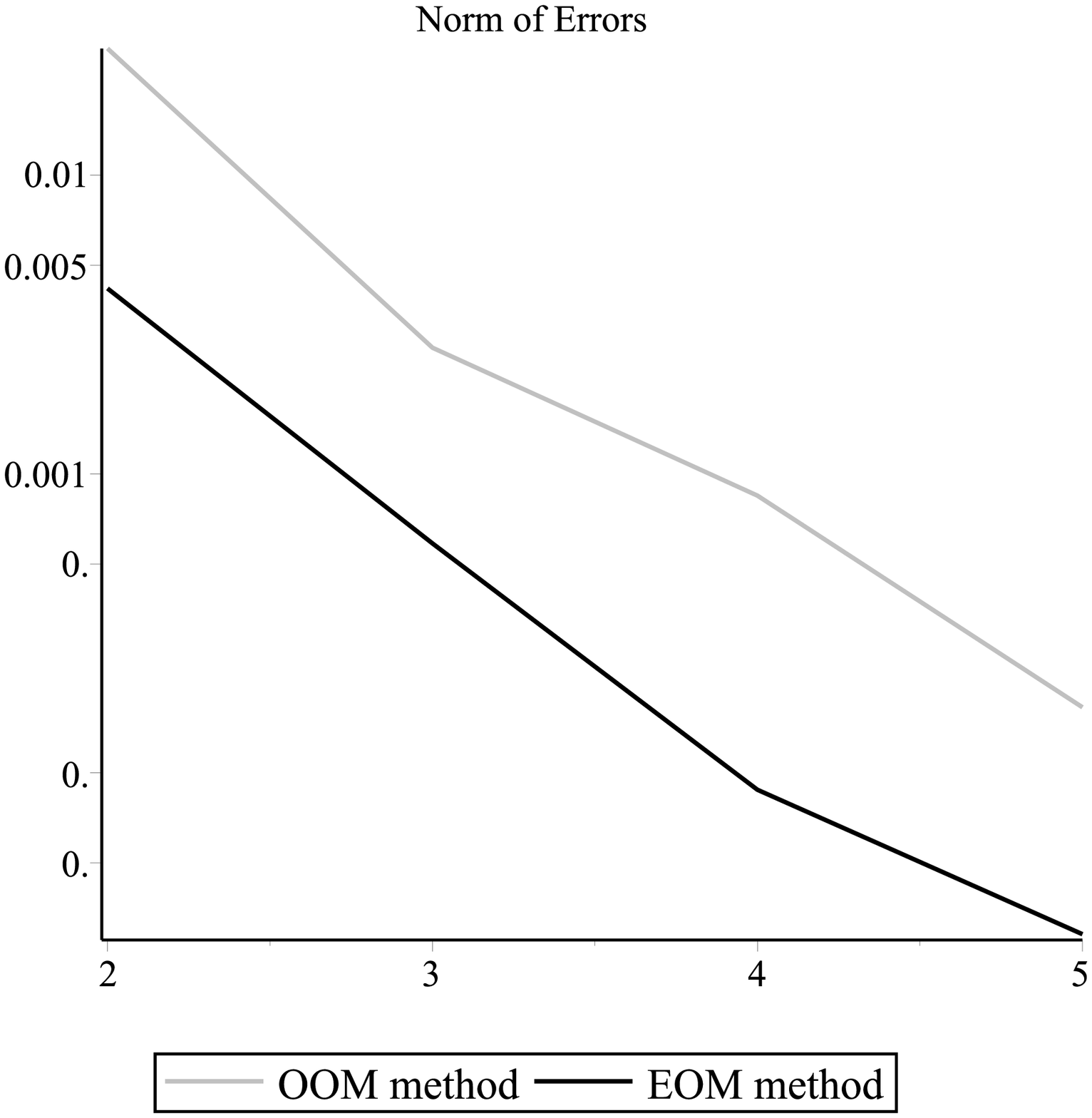}
\label{Fig_sin_5_12_errs}
}
\subfigure[$\fNorm{Residual\left(y_m(x)\right)}_1$]{
\centering
\includegraphics[scale=0.4]{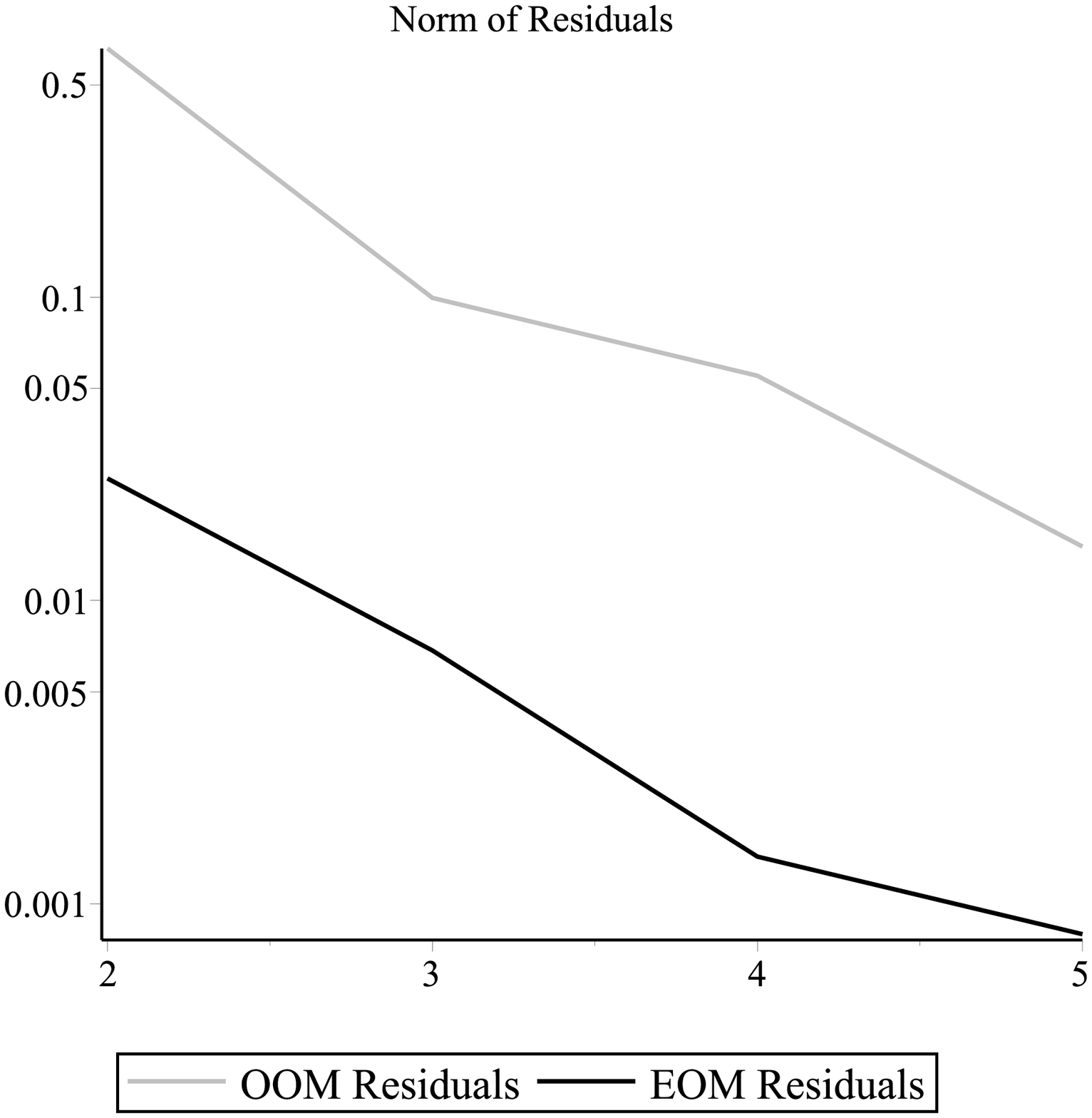}
\label{Fig_sin_5_12_ress}
}
\caption{
the norm1 of the residual and error function plots for several $m$ values and $f(x)= 1,~g(x)= \sin(y(x)),~N=12$}
\label{Fig_sin_5_12_s}
\end{figure}
\begin{figure}
\centering
\subfigure[$\fNorm{Error\left(y_M(x)\right)}_1$]{
\centering
\includegraphics[scale=0.4]{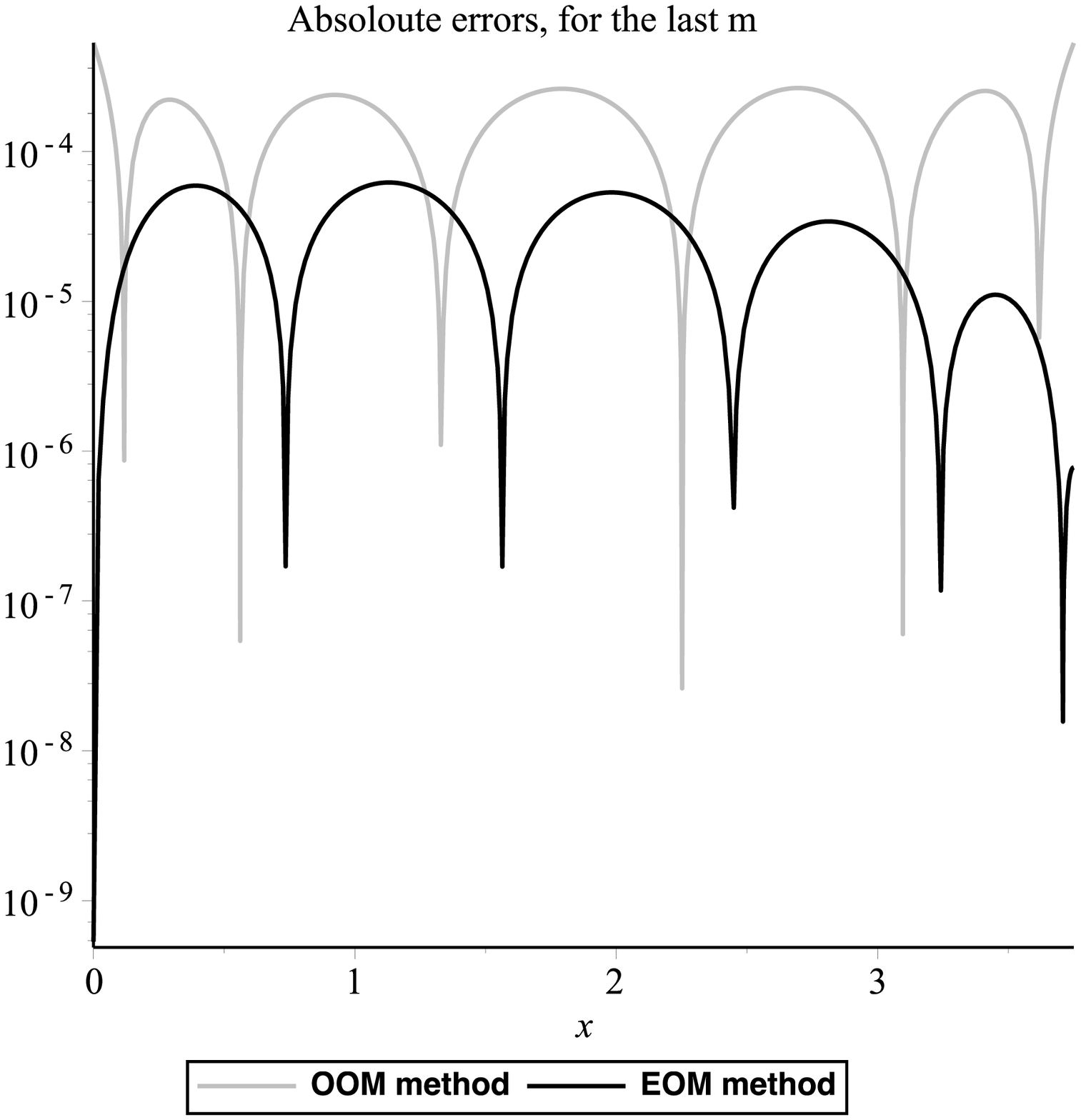}
\label{Fig_sin_5_12_last_err}
}
\subfigure[$\fNorm{Residual\left(y_M(x)\right)}_1$]{
\centering
\includegraphics[scale=0.4]{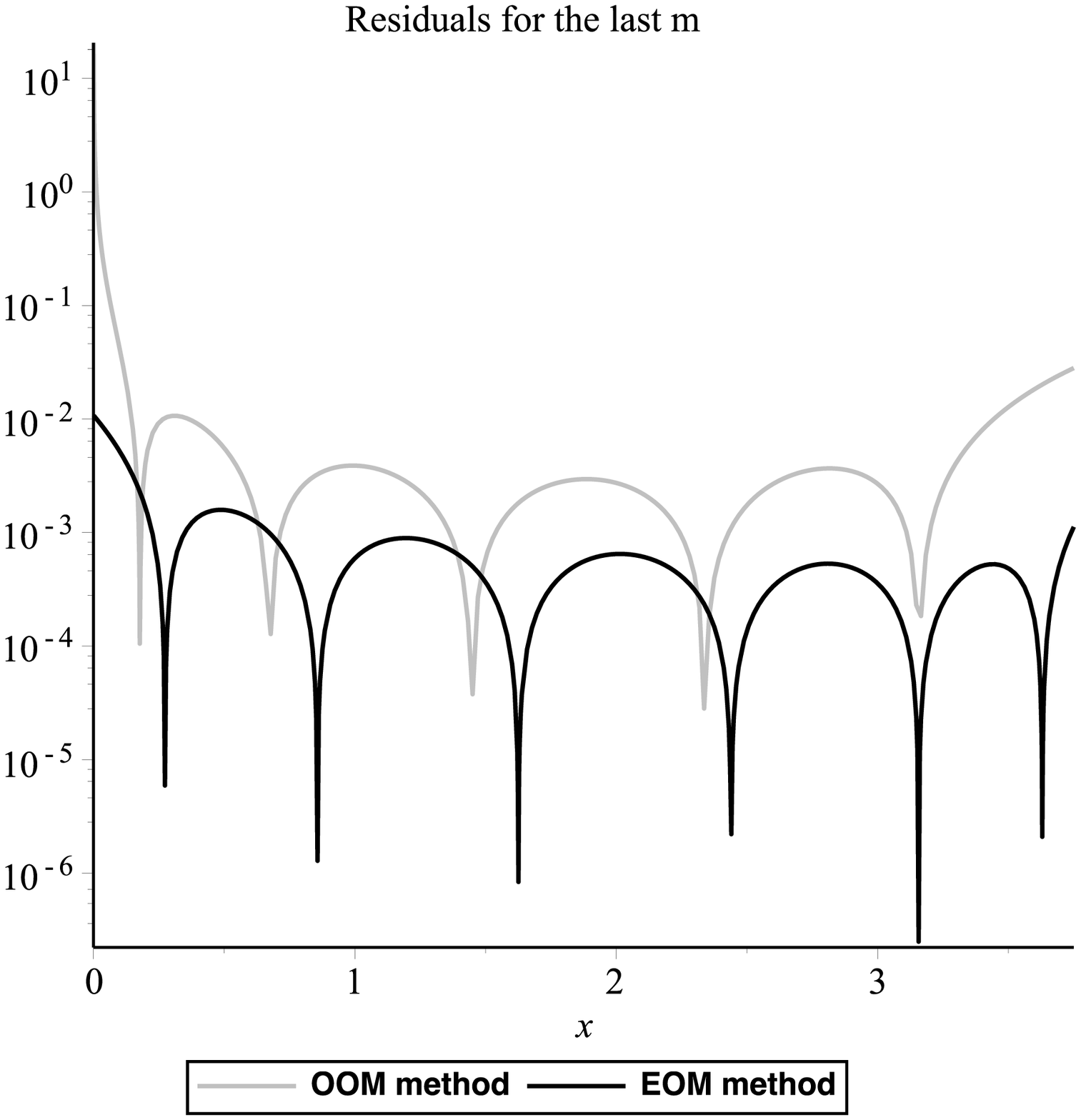}
\label{Fig_sin_5_12_last_res}
}
\caption{
the norm1 of the residual and error function plots for the largest $m$ value ($E$) of the figure
(\ref{Fig_sin_5_12_s}) for $f(x)= 1,~g(x)= \sin(y(x)),~N=12$}
\label{Fig_sin_5_12_last}
\end{figure}
\section{Concluding Remarks and suggestions for further study}\label{S_conclusion}
In this paper, we implemented the recently introduced exact operational matrices (EOMs) \cite{hossayn.tjmcs} of the Bernstein polynomials to solve the well-known Emden-Fowler equations.
To do so, well, it was necessary to achieve a relation for the best coefficients $e_i$ to approximate a function $g(x)$ in the form of $\sum_{i=1}^N{e_ix^i}$.
Then, we introduced a new matrix named by "series operational matrix" for finding the coefficients $s_i$ to write the function $g\left(y(x)\right)$ in the form of $\sum_{i=0}^{m\cdot N}{s_iB_{i,m\cdot N}(x)}$ where $B_{i,m\cdot N}(x)$s are the Bernstein polynomials of degree $m\cdot N$.\\
The differential equations which EOM idea were applied to solve them, are either linear or low-order nonlinear problems;
so, the reported results \cite{hossayn.tjmcs} were not comprehensive criteria for the superiority of the new method; therefore, we chose Emden-Fowler type equations, which have a high-order nonlinearity (because of the presence of the $g\left(y(x)\right)\simeq \sum_{i=0}^{m\cdot N}{e_iB_{i,m\cdot N}(x)}$ approximation), to be solved by EOMs.\\
For solving the differential equations by the Galerkin method, using the EOMs, we reached an almost exact residual function ($Residual(x)$) which was never obtainable by the old "ordinary operational matrices" (OOMs) in most of the problems.
We converted the residual function to a system of algebraic equations using the Galerkin operational matrix \cite{hossayn.tjmcs} and solved them to find the unknown function $y(x)$.\\
To have an appropriate criterion for the results accuracy of those problems which does not have exact solution, we solved the problem using a seventh-eighth order continuous Runge-Kutta method as an almost exact solution, using the Maple$^\copyright$ dverk78 function.\\
To see the results error convergence to zero, we applied the Galerkin method repeatedly, each new step with a larger $\psi_{n}(x)$. Then, we reported the decreasing norm of errors in some plots.
The reported results show the superiority of EOMs over OOMs.
Using the (almost) exact solution, we reported the norm1 of both methods error. We did the same for both residual functions. Then, we compared these result pairs, reporting their proportions.\\
The Emden-Fowler problem domain is $[0,\infty)$; for solving it by the Bernstein polynomials which are defined on the domain $[0,1]$, firstly, we truncated the problem domain to $[0,M]$ and then used a mapping to change its variable and solved it.\\
As some future works, EOM idea can be implemented for other basis functions.
Also, solving differential equations which have much close solutions to the vector space made by the Bernstein polynomials and some determinate norm, can illustrate the EOM efficiency much clear.
\bibliographystyle{elsart}
\bibliography{ref}
\end{document}